\documentclass[a4paper,12pt]{article}%
\usepackage[T1]{fontenc}
\usepackage[english]{babel}
\usepackage{amsmath,amsthm,amssymb,}
\usepackage{esint}
\usepackage{tikz}
\usepackage[caption=false]{subfig}
\usepackage{url}
\usepackage[left=2.5cm,right=2.5cm,top=2.5cm,bottom=2.5cm]{geometry}
\usepackage{fancyhdr}
\pagestyle{fancy}
\fancyhead{}
\setlength{\headheight}{14pt}

\fancyhead[c]{\small \it Comparison results for 
           the Stokes equations}
\makeatother

\newtheorem{theorem}{Theorem}

\newtheorem{lemma}[theorem]{Lemma}

\theoremstyle{definition}
\newtheorem{remark}[theorem]{Remark}
\numberwithin{figure}{section}
\numberwithin{equation}{section}
\numberwithin{theorem}{section}

\newcommand{\R}{\mathbb{R}}
\newcommand{\tri}{\mathcal{T}}
\newcommand{\E}{\mathcal{E}}
\newcommand{\N}{\mathcal{N}}
\newcommand{\dx}{\,dx}
\newcommand{\ds}{\,ds}
\newcommand{\C}{C(\Omega)}
\newcommand{\Cnull}{C_0(\Omega)}
\newcommand{\gradnc}{\nabla_{\operatorname{NC}}}
\newcommand{\divnc}{\operatorname{div}_{\operatorname{NC}}}
\newcommand{\Inc}{I_{\operatorname{NC}}}
\newcommand{\oscf}{\operatorname{osc}(f,\tri)}
\newcommand{\CR}{\operatorname{CR}^1_0(\tri)}
\newcommand{\Vcr}{V_{\operatorname{CR}}(\tri)}
\newcommand{\ucr}{u_{\operatorname{CR}}}
\newcommand{\pcr}{p_{\operatorname{CR}}}
\newcommand{\vcr}{v_{\operatorname{CR}}}
\newcommand{\wcr}{w_{\operatorname{CR}}}
\newcommand{\qcr}{q_{\operatorname{CR}}}
\newcommand{\Vm}{V_{\operatorname{MINI}}(\tri)}
\newcommand{\B}{\mathcal{B}}
\newcommand{\um}{u_{\operatorname{MINI}}}
\newcommand{\pmini}{p_{\operatorname{MINI}}}
\newcommand{\vm}{v_{\operatorname{MINI}}}
\newcommand{\qm}{q_{\operatorname{MINI}}}
\newcommand{\ulin}{u_{\operatorname{lin}}}
\newcommand{\ub}{u_{\operatorname{b}}}

\newcommand{\ubr}{u_{\operatorname{BR}}}
\newcommand{\vbr}{v_{\operatorname{BR}}}
\newcommand{\pbr}{p_{\operatorname{BR}}}
\newcommand{\qbr}{q_{\operatorname{BR}}}
\newcommand{\Vbr}{V_{\operatorname{BR}}(\tri)}
\newcommand{\up}{u_{\operatorname{P2}}}
\newcommand{\vp}{v_{\operatorname{P2}}}
\newcommand{\pp}{p_{\operatorname{P2}}}
\newcommand{\qp}{q_{\operatorname{P2}}}
\newcommand{\Vp}{V_{\operatorname{P2}}(\tri)}

\newcommand{\tr}{\operatorname{tr}}

\newcommand{\PS}{{\operatorname{PS}}}
\newcommand{\dev}{\operatorname{dev}}
\newcommand{\ddiv}{\operatorname{div}}

\begin{document}


\title{Comparison results for the Stokes equations}

\author{C.~Carstensen\footnote{Institut f\"ur Mathematik, 
            Humboldt-Universit\"at zu Berlin, 
            Unter den Linden 6, D-10099 Berlin,Germany}~\footnote{Department of Computational Science and Engineering,
            Yonsei University, Seoul, Korea}~
            K.~K\"ohler\footnotemark[1]~
            D.~Peterseim\footnote{Institut f\"ur Numerische Simulation, Universit\"at Bonn,
            Wegelerstr.\ 6, 53111 Bonn, Germany}~\footnote{Corresponding author. Tel.: +49 228 73-2058}~
            M.~Schedensack\footnotemark[1]}
\maketitle
\begin{abstract}
This paper enfolds a medius analysis for the Stokes equations and
compares different finite element methods (FEMs). 
A first result is a best approximation result for a $P_1$ non-conforming FEM. 
The main comparison result is that the error of the $P_2 P_0$-FEM
is a lower bound to the error of
the Bernardi-Raugel (or reduced $P_2 P_0$) FEM, 
which is a lower bound to the error of
the $P_1$ non-conforming FEM,
and this is a lower bound to the
error of the MINI-FEM.
The paper discusses the converse direction, as well as other methods
such as the discontinuous Galerkin and pseudostress FEMs.

Furthermore this paper provides 
counterexamples for equivalent convergence when different pressure 
approximations are considered.
The mathematical arguments are various conforming companions 
as well as the discrete inf-sup condition. 

\medskip
{\footnotesize\textit{Key words.} Stokes equations, comparison results, non-conforming finite element
method, Bernardi-Rau\-gel finite element method, $P_2 P_0$ finite element method, MINI finite element method, discontinuous Galerkin finite element method, pseudostress finite 
element method}
\end{abstract}



\section{Introduction}

\begin{figure}[h!]
  \begin{center}
\subfloat[\label{f:FEMsMini}MINI-FEM]{
 \begin{tikzpicture}[x=7.5mm,y=12.5mm]
\foreach \a in {0,2.5,5.5}
{ \draw (\a,0)--(1+\a,1)--(2+\a,0)--cycle; }
\foreach \a/\b in
{0/0,1/1,2/0,2.5/0,3.5/1,4.5/0,1/0.33333,3.5/0.333333,5.5/0,7.5/0,6.5/1}
{ \fill (\a,\b) circle (2pt); }
 \end{tikzpicture}
}
\hspace{1cm}
\subfloat[\label{f:FEMsCR}CR-NCFEM]{
 \begin{tikzpicture}[x=7.5mm,y=12.5mm]
\foreach \a in {6,8.5,11.5}
{ \draw (\a,0)--(1+\a,1)--(2+\a,0)--cycle; }
\foreach \a/\b in {6.5/0.5,7.5/0.5,7/0,12.5/0.33333,9/0.5,10/0.5,9.5/0}
{ \fill (\a,\b) circle (2pt); }%
%
 \end{tikzpicture}
}
\end{center}
	\caption{\label{f:FEMs}MINI-FEM and CR-NCFEM for the Stokes equations.}
\end{figure}
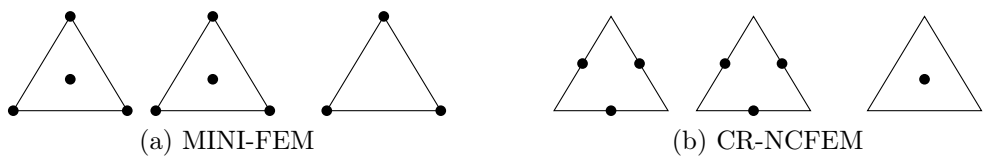

\begin{figure}
  \begin{center}
\subfloat[\label{f:FEMsP2P0}$P_2 P_0$-FEM]{
 \begin{tikzpicture}[x=7.5mm,y=12.5mm]
\foreach \a in {0,2.55,5.5}
{ \draw (\a,0)--(1+\a,1)--(2+\a,0)--cycle; }
\foreach \a/\b in {0/0,1/1,2/0,0.5/0.5,1./0,1.5/0.5}
{ \fill (\a,\b) circle (2pt); }
\foreach \a/\b in {2.55/0,3.55/1,4.55/0,3.55/0,4.05/0.5,3.05/0.5}
{ \fill (\a,\b) circle (2pt); }
\foreach \a/\b in {6.5/0.333333}
{ \fill (\a,\b) circle (2pt); }
\phantom{\draw (0,-0.5)--(1,-0.5);}
 \end{tikzpicture}
}
\hspace{1cm}
\subfloat[\label{f:FEMsBR}BR-FEM]{
 \begin{tikzpicture}[x=7.5mm,y=12.5mm]
\foreach \a in {11.8,14.55,17.5}
{ \draw (\a,0)--(1+\a,1)--(2+\a,0)--cycle; }
\foreach \a/\b in {11.8/0,13.8/0,12.8/1,14.55/0,15.55/1,16.55/0,18.5/0.33333}
{ \fill (\a,\b) circle (2pt); }
\foreach \a/\b in {13/0,13.5/0.5,12.5/0.5}
{\draw[->] (12.3,0.5)->(11.55,0.75);}
{\draw[->] (12.8,0)->(12.8,-0.5);}
{\draw[->] (13.3,0.5)->(14.05,0.75);}
\foreach \a/\b in {15.55/0,16.05/0.5,15.05/0.5}
{\draw[->] (15.05,0.5)->(14.3,0.75);}
{\draw[->] (15.55,0)->(15.55,-0.5);}
{\draw[->] (16.05,0.5)->(16.8,0.75);}
\end{tikzpicture}
}
\end{center}
	\caption{\label{f:FEMP2s}$P_2P_0$-FEM and BR-FEM for the Stokes 
equation.}
\end{figure}
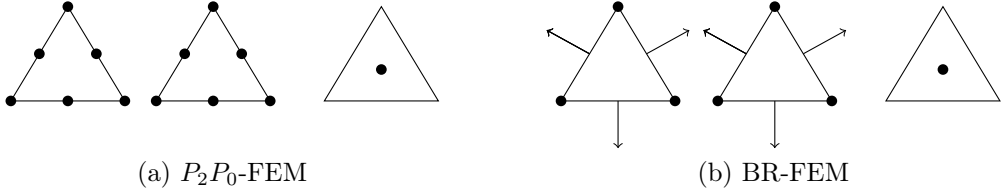

Given some external force $f\in L^2(\Omega;\mathbb{R}^2)$ in some polygonal
Lipschitz
domain $\Omega$, the Stokes equations seek the velocity field $u\in
H^1_0(\Omega;\mathbb{R}^2):=\{u\in
H^1(\Omega;\mathbb{R}^2)\;\vert\;u\vert_{\partial\Omega}=0\text{ in the sense of
traces}\}$ and the pressure distribution $p\in
L^2_0(\Omega):=\{q\in L^2(\Omega)\;\vert\;
\int_\Omega q\dx\linebreak =0\}$
with
\begin{align}\label{e:StokesProblem}
-\Delta u+\nabla p =f 
\quad\text{and}\quad
\operatorname{div}u=0
\quad\text{in }\Omega.
\end{align}
This paper compares several standard mixed finite element methods for the
numerical approximation of the unknown solution 
pair $(u,p)\in H^1_0(\Omega;\mathbb{R}^2)\times L^2_0(\Omega)$ in terms of
accuracy. 
Comparison results for the Poisson model problem 
of \cite{Braess09,CPS12} give rise to the conjecture that 
first-order finite element methods (FEMs) for the 
Stokes problem are comparable in the sense that their errors on the same mesh are 
equivalent up to multiplicative constants, which are independent of the local 
mesh-size.
The aim of this paper is to investigate the comparability of FEMs that are 
conceptually very different. The considered FEMs are MINI-FEM, CR-NCFEM, 
$P_2P_0$-FEM and BR-FEM (cf.\ Figures~\ref{f:FEMs}--\ref{f:FEMP2s}). 
Since they use different continuous and discontinuous 
approximations of the velocity and/or the pressure,
the approximation properties of the ansatz spaces do not allow 
for equivalence but only for a comparison in one direction.

The constraint $\operatorname{div}u=0$ excludes standard 
piecewise affine FEMs
based on continuous piecewise affine approximations
of the velocity components (see, e.g., \cite{BrennerScott08}).
The MINI-FEM from Figure~\ref{f:FEMsMini} (see Section \ref{ss:defFEMs} for a 
precise definition) is a conforming method which fulfils the constraint $\ddiv 
u=0$ in a weak sense only. It is based on a piecewise affine approximation of 
the velocity with an additional bubble function on each triangle for each 
component of the velocity. 

The $P_1$ non-conforming FEM, CR-NCFEM, 
from Figure~\ref{f:FEMsCR} (see Section~\ref{ss:defFEMs} for 
the precise definition),
however, fulfils this 
constraint element-wise.
While for the MINI-FEM the best approximation result
\begin{align*}
 \|\nabla(u&-\um)\|+\|p-\pmini\|\\
& \lesssim \min_{\vm\in\Vm}\|\nabla(u-\vm)\| 
   + \min_{\qm\in P_1(\tri)\cap\C\cap L^2_0(\Omega)} \|p-\qm\|
\end{align*}
is a direct consequence of the conformity and stability,
this paper proves the best approximation result
\begin{align*}
 \|\gradnc&(u-\ucr)\|+\|p-\pcr\|\\
 &\lesssim \min_{\vcr\in\Vcr}\|\gradnc(u-\vcr)\|
      +\min_{\qcr\in P_0(\tri)}\|p-\qcr\| +\oscf
\end{align*}
for the CR-NCFEM.
The notation $A \lesssim B$ abbreviates the inequality
$A\leq CB$ with a mesh-size independent generic constant $C>0$.
The constant $C$ may depend on the minimal angle in the triangulation but not on 
the local mesh-size.
The best approximation result leads to the comparison
\begin{align*}
 \|\gradnc(u-\ucr)\|+\|p-\pcr\|
 \lesssim \|\nabla(u-\um)\|+\|p-\pmini\| + \|h_\tri f\|
\end{align*}
with the additional term $\|h_\tri f\|$ with 
the piecewise constant mesh-size $h_\tri$.

The $P_2 P_0$-FEM and the BR-FEM, from 
Figure~\ref{f:FEMsP2P0} and \ref{f:FEMsBR}, 
approximate the velocity by piecewise $P_2$ and some enriched $P_1$
functions and the pressure by piecewise constant functions.
The conformity of the $P_2 P_0$-FEM and the inclusion
$\Vbr\subseteq\Vp$ for the underlying finite element spaces
of the velocity approximation of BR-FEM and 
$P_2 P_0$-FEM imply
\begin{align*}
  \|\nabla(u-\up)\| + \|p-\pp\|
 \lesssim
 \|\nabla( u-\ubr)\| + \|p-\pbr\|.
\end{align*}
Since there exist examples where the convergence of the $P_2P_0$-FEM is of 
second order and the BR-FEM is a first order method the converse direction of 
this estimate cannot be expected to hold in general (see Remark 
\ref{r:BRlessP2P0})
The use of a conforming companion of the non-conforming solution $\ucr\in\Vcr$
of the CR-NCFEM yields
\begin{align*}
  \|\nabla(u-\ubr)\|+\|p-\pbr\|
   \lesssim \|\gradnc(u-\ucr)\|+\|p-\pcr\|.  
\end{align*}
Altogether, the main comparison results of this paper read
\begin{equation}\label{e:compCompleteIntro}
\begin{aligned}
 \|\nabla(u-\up)\|+\|p-\pp\|
 &\lesssim \|\nabla(u-\ubr)\|+\|p-\pbr\|\\
 &\lesssim \|\gradnc(u-\ucr)\|+\|p-\pcr\|\\
 &\lesssim \|\nabla(u-\um)\|+\|p-\pmini\|+\|h_\tri f\|.
\end{aligned}
\end{equation}

Furthermore this paper discusses the pressure approximation
by piecewise constant functions and by continuous piecewise
affine functions.
Theorem~\ref{t:pCRNlesspP1} proves that 
\begin{align*}
 \|p-p_h\|\lesssim\|\nabla(u-u_H)\|+\|p-p_H\|+\oscf
\end{align*}
does \emph{not} hold in general for solutions $(u_h,p_h)$
and $(u_H,p_H)$ of FEMs with piecewise constant
resp.\ continuous piecewise affine approximations of the pressure.
On the other hand,
the continuity of the pressure approximation 
is not a natural restriction and causes 
that
\begin{align*}
 \|p-p_H\|\lesssim \|\gradnc(u-u_h)\|+\|p-p_h\|
\end{align*}
does {\em not} hold in general.

Additionally the paper includes a comparison of CR-NCFEM
with a pseudostress approximation.

\medskip
All of the results are proven by {\it medius analysis}.
This means that arguments from a~posteriori techniques
lead to a~priori results. The notation medius analysis
was introduced in \cite{Gudi10}
and this technique leads to results which rely
on minimal regularity of the weak solution (i.e. $f\in L^2(\Omega)$) and hold
even for arbitrary coarse meshes.

\medskip
For all four considered FEMs a three-dimensional
extension \cite{BoffiBrezziFortin13} exists. In this situation all the 
arguments of this paper are applicable and the results remain true.

\medskip
The remaining parts of this paper are organised as follows. 
Section~\ref{s:preliminaries} introduces the
FEMs as well as underlying triangulations, corresponding operators, and other 
notation.
Section~\ref{s:CRbestapprox} performs a medius analysis of the CR-NCFEM. The
comparison results are stated and proven in Section~\ref{s:comparisonresults}. In
particular Subsection~\ref{ss:P1results} presents the comparison between CR-NCFEM
and MINI-FEM,  
Subsection~\ref{ss:P2results} is devoted to the comparison between
$P_2 P_0$-FEM, BR-FEM and CR-NCFEM. 
The comparison of the pressure approximations is performed in Subsection~\ref{ss:pressure}
and the inclusion of 
further methods is discussed in Subsection~\ref{ss:furtherMethods}.
Section~\ref{s:numerics} illustrates the behaviour
of the four FEMs from Figure~\ref{f:FEMs} and 
Figure~\ref{f:FEMP2s} in numerical experiments.
Subsection~\ref{ss:conclusions} summarises the paper 
with some conclusions.

\medskip
Throughout this paper, standard notation on Lebesgue
and Sobolev spac\-es is employed and 
$\|\bullet\|:=\|\bullet\|_{L^2(\Omega)}$ abbreviates 
the $L^2$ norm.
The formula $A\lesssim B$ abbreviates an inequality
$A\leq C B$ for some mesh-size independent, positive 
generic constant $C$; $A\approx B$ abbreviates
$A\lesssim B\lesssim A$.
The space $\C$ denotes the space of continuous functions and
$\Cnull:=\C\cap H^1_0(\Omega)$ the space of continuous
functions with homogeneous Dirichlet boundary conditions.
$A:B$ denotes the scalar product $A:B=\sum_{j,k=1}^2 A_{jk} B_{jk}$
for $A,B\in\mathbb{R}^{2\times 2}$.

\section{Preliminaries}\label{s:preliminaries}

This section introduces precise definitions of the 
Stokes equations and the FEMs under consideration.

\subsection{Stokes Equations}
Given a right-hand side $f\in L^2(\Omega;\mathbb{R}^2)$
in some polygonal Lipschitz domain,
the weak formulation of \eqref{e:StokesProblem} seeks $u\in
H^1_0(\Omega;\mathbb{R}^2)$
and $p\in L^2_0(\Omega)$ with 
\begin{equation}\label{e:weakForm}
\begin{aligned}
 \int_\Omega \nabla u :\nabla v\dx -\int_\Omega p\operatorname{div}v\dx
 &= \int_\Omega f\cdot v\dx
 \qquad&&\text{for all }v\in H^1_0(\Omega),\\
 \int_\Omega q\operatorname{div} u \dx &= 0
 \qquad&&\text{for all }q\in L^2_0(\Omega).
\end{aligned}
\end{equation}

\subsection{Triangulations}
A shape-regular triangulation $\tri$ of a bounded
Lipschitz domain $\Omega\subseteq \mathbb{R}^2$ is a set of triangles $T\in \tri$
such that $\overline{\Omega}=\cup\tri$ and any two distinct 
triangles are either disjoint or share exactly 
one common edge or one vertex.
Let $\N$ denote the set of vertices of $\tri$ 
and $\E$ the set of edges. 
The set of interior nodes is defined by $\N(\Omega):=\N\cap\Omega$
and the set of interior edges by $\E(\Omega):=\{E\in\E\mid E\not\subseteq \partial\Omega\}$.
Let $\N(T)$ denote the nodes of a triangle $T\in\tri$, $\tri(z):=\{T\in \tri\mid 
z\in \N(T)\}$ the elements which contain the node $z\in \N$, and $|\tri(z)|$ the 
number of elements in $\tri(z)$. 
Let 
\begin{align*}
\begin{array}{rl}
P_k(T;\R^m) &:= \{v_k:T\rightarrow \R^m\;|\;\forall j=1,\ldots,m,\text{ the
component}\\
      &\qquad\qquad\;v_k(j) \text{ of }v_k
\text{ is a polynomial of total degree}\leq k\},\\
P_k(\tri;\R^m) &:= \{v_k:\Omega\rightarrow \R^m\;| \;
\forall T\in\tri,v_k|_T \in P_k(T;\R^m)\}
\end{array}
\end{align*}
denote the set of piecewise polynomials
and abbreviate $P_k(\tri)=P_k(\tri;\R)$. The $L^2$ projection 
\[{\Pi_0:L^2(\Omega;\R^m)\rightarrow P_0(\tri;\R^m)}\] is given by  
$\tri$-piecewise constant functions or vectors 
$(\Pi_0 f)\vert_T=\fint_T f \dx:=\int_T f\dx/\vert T\vert$ 
for all $T\in\tri$ with area $\vert T\vert$ 
and all $f\in L^2(\Omega;\mathbb{R}^m)$. 
Let $h_\tri\in P_0(\tri)$ denote the piecewise constant mesh-size 
with $h_\tri\vert_T:=\operatorname{diam}(T)$ for all $T\in\tri$.

For piecewise affine functions $v_h\in P_1(\tri)$ the 
$\tri$-piecewise gradient $\gradnc v_h$ with \linebreak
$(\gradnc v_h)\vert_T=\nabla (v_h\vert_T)$ for all $T\in\tri$ 
and, accordingly, $\divnc\tau_h$ for $\tau_h\in P_1(\tri;\mathbb{R}^2)$
exists with $\gradnc v_h\in P_0(\tri;\mathbb{R}^{2})$
and $\divnc\tau_h\in P_0(\tri)$.

The oscillations of $f\in L^2(\Omega)$ read 
$\oscf:= \|h_\tri(f-\Pi_0 f)\|$.

\subsection{Finite Element Methods}\label{ss:defFEMs}
This section presents different finite element methods that have a piecewise
polynomial approximation of the velocity field. The pressure is approximated
with either piecewise constants or continuous piecewise affine functions. 
All methods are first-order accurate for a general smooth
solution $(u,p)\in H^2(\Omega;\mathbb{R}^2)\times H^1(\Omega)$. 
\paragraph{CR-NCFEM}
The $P_1$ non-conforming finite element method CR-NCFEM
after Crou\-zeix and Raviart \cite{CR} employs the space
\begin{align*}
 \CR:=\{\vcr\in  P_1&(\tri)\;|\;\vcr
\text{ is continuous }
\text{at midpoints of interior }\\
&\text{edges and vanishes at midpoints of }
\text{boundary edges}\}.
\end{align*}
The velocity is approximated in the space
\begin{align*}
 \Vcr:=\CR\times\CR.
\end{align*}
The CR-NCFEM seeks $(\ucr,\pcr)\in\Vcr\times \big(P_0(\tri)\cap L^2_0(\Omega)\big)$
such that
\begin{equation}\label{e:CRproplem}
\begin{aligned}
 \int_\Omega\gradnc\ucr:\gradnc\vcr\dx-\int_\Omega\pcr\divnc\vcr\dx
  &= \int_\Omega f\cdot\vcr\dx,\\
\int_\Omega\qcr\divnc\ucr\dx &= 0
\end{aligned}
\end{equation}
for all $\vcr\in\Vcr$ and $\qcr\in \big( P_0(\tri)\cap L^2_0(\Omega)\big)$; The
CR-NCFEM is inf-sup stable \cite{CR}.

\paragraph{MINI-FEM}
In the MINI-FEM \cite{ABF} the continuous
piecewise affine approximation for the velocity 
is enlarged with cubic bubble
functions, namely by elements of
\begin{align*}
 \B:=\{\psi\in P_3(\tri)\cap\Cnull\;\vert\;
   \forall &T=\operatorname{conv}\{a,b,c\}\in\tri\;\\
& \exists \alpha_T\in\mathbb{R}:\psi\vert_T=\alpha_T\varphi_a\varphi_b\varphi_c
\},
\end{align*}
where $\varphi_a$ (resp.\ $\varphi_b$, $\varphi_c$) 
is the piecewise affine nodal basis function of the node
$a$ (resp.\ $b$, $c$).
The MINI-FEM space for the velocity reads
\begin{align*}
\Vm:=\Big(\big(P_1(\tri)\cap \Cnull\big)
+\B\Big)^2.
\end{align*}
The MINI-FEM seeks $(\um,\pmini)\in\Vm\times (P_1(\tri)\cap\C\cap 
L^2_0(\Omega))$
with
\begin{equation}\label{e:mini}
\begin{aligned}
 \int_\Omega\nabla\um:\nabla\vm\dx
  -\int_\Omega\pmini\operatorname{div}\vm\dx &=\int_\Omega f\cdot\vm\dx,\\
\int_\Omega\qm\operatorname{div}\um\dx&=0
\end{aligned} 
\end{equation}
for all $\vm\in\Vm$ and $\qm\in (P_1(\tri)\cap\C\cap L^2_0(\Omega))$;
The MINI-FEM is inf-sup stable \cite{ABF}.

\paragraph{$P_2 P_0$-FEM}
The $P_2 P_0$-FEM seeks 
$\up\in \Vp:=\big(P_2(\tri)\cap
\Cnull\big)^2$ and 
$\pp\in P_0(\tri)\cap L^2_0(\Omega)$
with 
\begin{equation}\label{e:P2}
\begin{aligned}
 \int_\Omega\nabla\up:\nabla\vp\dx
  -\int_\Omega\pp\operatorname{div}\vp\dx &=\int_\Omega
f\cdot\vp\dx,\\
\int_\Omega\qp\operatorname{div}\up\dx&=0 
\end{aligned}
\end{equation}
for all $\vp\in\Vp$ and all $\qp\in P_0(\tri)\cap L^2_0(\Omega)$; The 
$P_2 P_0$-FEM is inf-sup stable \cite{BBDDFF06}.

\paragraph{BR-FEM}
The BR-FEM after Bernardi and Raugel \cite{BR} is a modification of the 
$P_2 P_0$-FEM. It is sometimes also called reduced $P_2 P_0$-FEM \cite{BBDDFF06}.
For a node $a\in \N$, let $\varphi_a$ denote the $P_1$ nodal basis function
and for an edge $E\in\E$, let 
$\nu_E$ denote the outer unit normal.
The space of edge bubbles reads 
\begin{align*}
 \B_\E:=\{\psi\in \big(P_2(\tri)\cap\Cnull\big)^2\;\vert\;
   \forall E=\operatorname{conv}\{a,b\}\in\E\;
  \exists &\alpha_E\in\mathbb{R}\\
 & \psi\vert_E=\alpha_E\varphi_a\varphi_b \nu_E\}.
\end{align*}
The BR-FEM approximation seeks $\ubr\in
\Vbr:= \big(P_1(\tri)\cap C_0(\Omega)\big)^2\oplus\B_\E$ and $p\in
P_0(\tri)\cap L^2_0(\Omega)$ with 

\begin{equation}\label{e:BR}
\begin{aligned}
 \int_\Omega\nabla\ubr:\nabla\vbr\dx
  -\int_\Omega\pbr\operatorname{div}\vbr\dx &=\int_\Omega
f\cdot\vbr\dx,\\
\int_\Omega\qbr\operatorname{div}\ubr\dx&=0
\end{aligned} 
\end{equation}
for all $\vbr\in \Vbr$ and all $\qbr\in P_0(\tri)\cap L^2_0(\Omega)$; The
BR-FEM is inf-sup stable \cite{BR}.

\subsection{Conforming Companions}
The design of three conforming companions to any $\vcr\in\Vcr$  begins with the
map $J_1: \CR\rightarrow P_1(\tri)\cap C_0(\Omega)$ defined by
\begin{align*}
 J_1\vcr:=\sum_{z\in \N(\Omega)}|\tri(z)|^{-1}\sum_{T\in
\tri(z)}\vcr|_T(z)\,\varphi_z,
\end{align*}
where $\varphi_z$ denotes the conforming nodal basis function.
For a given edge $E:=\operatorname*{conv}\{a,b\}\in \E$ let
$b_E:=6\varphi_a\varphi_b$ denote the edge bubble function. Then the operator
$J_2:\CR\rightarrow P_2(\tri)\cap C_0(\Omega)$ is given by
\[J_2\vcr:=J_1\vcr+\sum_{E\in\E(\Omega)}\left(\fint_E(\vcr-J_1\vcr)\ds\right)b_E.\]
For any triangle $T\in \tri$ with $T=\operatorname*{conv}\{a,b,c\}$ define the
element bubble function  $b_T:=60\varphi_a\varphi_b\varphi_c$. The operator
$J_3:\CR\rightarrow P_3(\tri)\cap C_0(\Omega)$ is given by
\[J_3\vcr:=J_2\vcr+\sum_{T\in\tri}\left(\fint_T(\vcr-J_2\vcr)\dx\right)b_T.\]
%
\begin{lemma}[{\cite{CGS13EV}}]\label{l:lemJ3}
The operators $J_k:\CR\to(P_k(\tri)\cap\Cnull)$, $k=1,2,3$, defined above satisfy 
the conservation properties
\begin{subequations}\label{e:conservationProperty}
\end{subequations}
\begin{align}
 \int_T\gradnc(\vcr - J_k\vcr)\dx 
&= 0
&&\quad\text{for all } T\in\tri\text{ and } k=2,3,\\
 \int_T(\vcr-J_3\vcr)\dx&=0\quad&&\quad\text{for all }T\in\tri 
\end{align}
and the  approximation and stability properties for $k=1,2,3$
\begin{equation}\label{e:approxstabilJ3}
\begin{aligned}
\|h_\tri^{-1}(\vcr-J_k\vcr)\|&\approx\|\gradnc(\vcr-J_k\vcr)\|\\
 &\approx \min_{\varphi\in H^1_0(\Omega)} \|\gradnc(\vcr-\varphi)\|\\
&\leq\|\gradnc\vcr\|.
\end{aligned}
\end{equation}
\end{lemma}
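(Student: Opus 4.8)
The plan is to treat the conservation properties and the norm equivalences separately, and within the equivalences to route everything through the nodal averaging operator $J_1$ and the jump seminorm $\big(\sum_{E\in\E}h_E^{-1}\|[\vcr]_E\|_{L^2(E)}^2\big)^{1/2}$, where $[\vcr]_E$ denotes the jump across an interior edge and the trace on a boundary edge.

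First, the conservation properties. I would start from the identity $\int_T\gradnc(\vcr-J_k\vcr)\dx=\sum_{E\subseteq\partial T}\nu_E\int_E(\vcr-J_k\vcr)\ds$, obtained by the divergence theorem on the single triangle $T$. Because $\vcr$ is continuous at edge midpoints, $\fint_E\vcr\ds=\vcr(\mathrm{mid}_E)$ is single-valued, and on boundary edges it vanishes; the same holds for the continuous image $J_k\vcr\in\Cnull$. The edge bubble $b_E=6\varphi_a\varphi_b$ is normalised so that $\fint_E b_E\ds=1$ while $\fint_{E'}b_E\ds=0$ for $E'\neq E$; hence the correction $\sum_E(\fint_E(\vcr-J_1\vcr)\ds)b_E$ defining $J_2$ forces $\fint_E(\vcr-J_2\vcr)\ds=0$ on every interior edge, and the boundary edges vanish as above. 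This gives the gradient conservation for $k=2$, and since $b_T=60\varphi_a\varphi_b\varphi_c$ vanishes on $\partial T$ the edge integrals are unchanged by the step $J_2\mapsto J_3$, so it persists for $k=3$. Similarly $\int_T b_T\dx=|T|$ while $b_{T'}$ is supported in $T'$, so the element-bubble correction defining $J_3$ yields $\int_T(\vcr-J_3\vcr)\dx=0$.

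For the norm equivalence I would close a cycle of three inequalities. The inclusion $J_k\vcr\in H^1_0(\Omega)$ gives for free $\min_{\varphi\in H^1_0(\Omega)}\|\gradnc(\vcr-\varphi)\|\le\|\gradnc(\vcr-J_k\vcr)\|$, and a piecewise inverse estimate (valid since $\vcr-J_k\vcr$ is a piecewise polynomial of bounded degree) gives $\|\gradnc(\vcr-J_k\vcr)\|\lesssim\|h_\tri^{-1}(\vcr-J_k\vcr)\|$. It therefore remains to prove the return inequality $\|h_\tri^{-1}(\vcr-J_k\vcr)\|\lesssim\min_\varphi\|\gradnc(\vcr-\varphi)\|$, which closes the loop and forces all three quantities to be equivalent. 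I would first reduce to $k=1$: writing $\vcr-J_2\vcr=(\vcr-J_1\vcr)-\sum_E(\fint_E(\vcr-J_1\vcr)\ds)b_E$ and estimating each edge-bubble contribution by a trace inequality together with the inverse estimate shows $\|h_\tri^{-1}(\vcr-J_k\vcr)\|\lesssim\|h_\tri^{-1}(\vcr-J_1\vcr)\|$ for $k=2,3$ (and the analogous element-bubble step). A standard scaling argument for the averaging operator---expressing the nodal defect $(\vcr-J_1\vcr)(z)$ as an average over the patch $\tri(z)$ of midpoint-zero jumps---then gives $\|h_\tri^{-1}(\vcr-J_1\vcr)\|^2\lesssim\sum_{E\in\E}h_E^{-1}\|[\vcr]_E\|_{L^2(E)}^2$.

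The crux is the final bound $\sum_{E}h_E^{-1}\|[\vcr]_E\|_{L^2(E)}^2\lesssim\min_\varphi\|\gradnc(\vcr-\varphi)\|^2$. Here I would exploit the defining feature of the Crouzeix--Raviart space: continuity at midpoints makes $[\vcr]_E$ an affine function vanishing at $\mathrm{mid}_E$, hence $\int_E[\vcr]_E\ds=0$. For arbitrary $\varphi\in H^1_0(\Omega)$ one has $[\varphi]_E=0$, so $[\vcr]_E=[\vcr-\varphi]_E$ still has vanishing edge mean. This zero-mean property is exactly what upgrades the trace inequality to the gradient-only estimate $\|w-\fint_E w\,\ds\|_{L^2(E)}^2\lesssim h_E\|\nabla w\|_{L^2(T)}^2$ on each adjacent triangle (no uncontrolled $L^2(T)$ term appears), applied to $w=\vcr-\varphi$; summing over edges and minimising over $\varphi$ gives the claim, and the trivial choice $\varphi=0$ yields the concluding bound by $\|\gradnc\vcr\|$. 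The main obstacle is precisely this last step: without the midpoint continuity the edge mean of the jump would not vanish and the naive trace inequality would leave an $L^2$ term that the broken gradient cannot absorb, so the whole argument hinges on using the CR structure to obtain a mean-zero jump.
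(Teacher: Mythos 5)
The paper itself gives no proof of this lemma---it is quoted from \cite{CGS13EV}---so there is nothing internal to compare against; your argument is correct and is essentially the standard one used for these enrichment operators. The conservation properties follow, as you say, from $\fint_E b_E\ds=1$, $\fint_{E'}b_E\ds=0$ for $E'\neq E$, $\fint_T b_T\dx=1$ and the divergence theorem on each triangle, and your cycle $C\le B\lesssim A\lesssim C$ (routed through the scaled jump seminorm, the zero edge-mean of Crouzeix--Raviart jumps, and the trace--Poincar\'e inequality $\|w-\fint_E w\ds\|_{L^2(E)}^2\lesssim h_E\|\nabla w\|_{L^2(T)}^2$) correctly establishes all three equivalences, with $\varphi=0$ giving the final bound.
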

\section{Medius Analysis for CR-NCFEM}\label{s:CRbestapprox}
This section states and proves a best-approximation
result for CR-NCFEM.
\begin{theorem}[best-approximation result]\label{t:CRbestapprox}
Any $\vcr\in\Vcr$ and $\qcr\in P_0(\tri)\cap L^2_0(\Omega)$ satisfy
\begin{align*}
 \|\gradnc(u-\ucr)\|+\|p-\pcr\|
 \lesssim \|\gradnc(u-\vcr)\|+\|p-\qcr\| +\oscf.
\end{align*}
\end{theorem}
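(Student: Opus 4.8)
The plan is to prove the sharper estimate with $\Pi_0\nabla u$ and $\Pi_0 p$ on the right-hand side and then conclude by quasi-optimality. Since $\gradnc\vcr\in P_0(\tri;\R^{2\times 2})$ for every $\vcr\in\Vcr$ and $\qcr\in P_0(\tri)$, the $L^2$ projection is the best piecewise constant approximation, whence $\|\nabla u-\Pi_0\nabla u\|\le\|\gradnc(u-\vcr)\|$ and $\|p-\Pi_0 p\|\le\|p-\qcr\|$. It therefore suffices to bound the left-hand side by $\|\nabla u-\Pi_0\nabla u\|+\|p-\Pi_0 p\|+\oscf$. The natural comparison object is the nonconforming interpolation $\Inc u$, which is divergence-free (its piecewise gradient equals $\Pi_0\nabla u$, so $\divnc\Inc u=\Pi_0\ddiv u=0$) and satisfies $\|\gradnc(u-\Inc u)\|=\|\nabla u-\Pi_0\nabla u\|$; the pressure comparison is $\Pi_0 p$.

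For the velocity error I set $\wcr:=\ucr-\Inc u\in\Vcr$, which is divergence-free, so that testing the first equation of \eqref{e:CRproplem} with $\wcr$ deletes the discrete pressure term and gives $\|\gradnc\wcr\|^2=\int_\Omega f\cdot\wcr\dx-\int_\Omega\gradnc\Inc u:\gradnc\wcr\dx$. The crux is to feed $\wcr$ into the continuous weak form \eqref{e:weakForm}, which is only admissible for conforming test functions; here the companion $J_3\wcr\in(P_3(\tri)\cap\Cnull)^2\subset H^1_0(\Omega;\R^2)$ of Lemma~\ref{l:lemJ3} enters. I split $\int_\Omega f\cdot\wcr=\int_\Omega f\cdot J_3\wcr+\int_\Omega(f-\Pi_0 f)\cdot(\wcr-J_3\wcr)$, where the value-conservation $\int_T(\wcr-J_3\wcr)\dx=0$ permits insertion of $\Pi_0 f$ and the second term is $\lesssim\oscf\,\|h_\tri^{-1}(\wcr-J_3\wcr)\|\lesssim\oscf\,\|\gradnc\wcr\|$ by \eqref{e:approxstabilJ3}. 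Replacing $\int_\Omega f\cdot J_3\wcr$ through \eqref{e:weakForm} and exploiting the gradient-conservation $\int_T\gradnc(\wcr-J_3\wcr)\dx=0$ (so that $\Pi_0\gradnc J_3\wcr=\gradnc\wcr$), I rewrite $\int_\Omega\nabla u:\nabla J_3\wcr=\int_\Omega\nabla u:\gradnc\wcr+\int_\Omega(\nabla u-\Pi_0\nabla u):\gradnc(J_3\wcr-\wcr)$ and, since $\Pi_0\ddiv J_3\wcr=\divnc\wcr=0$, estimate $\int_\Omega p\,\ddiv J_3\wcr=\int_\Omega(p-\Pi_0 p)\,\ddiv J_3\wcr\lesssim\|p-\Pi_0 p\|\,\|\gradnc\wcr\|$. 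Collecting terms, the leading contribution combines into $\int_\Omega\gradnc(u-\Inc u):\gradnc\wcr\le\|\nabla u-\Pi_0\nabla u\|\,\|\gradnc\wcr\|$, and after dividing by $\|\gradnc\wcr\|$ and a triangle inequality I obtain $\|\gradnc(u-\ucr)\|\lesssim\|\nabla u-\Pi_0\nabla u\|+\|p-\Pi_0 p\|+\oscf$.

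For the pressure I write $\|p-\pcr\|\le\|p-\Pi_0 p\|+\|\Pi_0 p-\pcr\|$ and control the mean-free piecewise constant $\Pi_0 p-\pcr$ through the discrete inf-sup stability of CR-NCFEM: there is $\wcr\in\Vcr$ (now not divergence-free) with $\|\Pi_0 p-\pcr\|\lesssim\int_\Omega(\Pi_0 p-\pcr)\,\divnc\wcr\dx/\|\gradnc\wcr\|$. Using $\int_\Omega\Pi_0 p\,\divnc\wcr=\int_\Omega p\,\divnc\wcr$ and eliminating $\int_\Omega\pcr\divnc\wcr$ via the discrete momentum equation, the same companion bookkeeping as above turns the numerator into $\int_\Omega\gradnc(u-\ucr):\gradnc\wcr$ plus consistency remainders controlled by $\|\nabla u-\Pi_0\nabla u\|$, $\|p-\Pi_0 p\|$ and $\oscf$; here the divergence terms $\int_\Omega p\,\divnc\wcr$ cancel exactly. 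This yields $\|\Pi_0 p-\pcr\|\lesssim\|\gradnc(u-\ucr)\|+\|\nabla u-\Pi_0\nabla u\|+\|p-\Pi_0 p\|+\oscf$, into which the already-established velocity bound is inserted. Combining both estimates and undoing the reduction of the first paragraph proves the theorem.

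The step I expect to be the main obstacle is the consistency analysis through the conforming companion: one must use both conservation properties of $J_3$ simultaneously — the gradient-mean identity to reduce $\nabla u$ and $p$ to their piecewise constant best approximations, and the value-mean identity to trade $f$ against its oscillation — and one must pick the divergence-free comparison $\Inc u$ so that the otherwise circular coupling $\int_\Omega(p-\pcr)\divnc\wcr$ disappears in the velocity estimate. The residual appearance of $\|\gradnc(u-\ucr)\|$ in the pressure bound is harmless precisely because the velocity estimate is established first and independently.
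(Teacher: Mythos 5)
Your proof is correct and follows essentially the same route as the paper: the divergence-free comparison $\Inc u$ (with $\gradnc \Inc u=\Pi_0\nabla u$) combined with the conforming companion $J_3$ and its two conservation properties for the velocity, and the discrete inf-sup condition with the same companion bookkeeping for the pressure. The only immaterial difference is that you estimate $\|\gradnc(\ucr-\Inc u)\|$ directly and finish with a triangle inequality, whereas the paper expands $\|\gradnc(u-\ucr)\|^2$ into two terms before applying the identical arguments.
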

The error analysis of \cite{CR} employs a Strang-Fix decomposition.
To obtain an error estimate this approach requires 
$u\in H^2(\Omega)$ and $p\in H^1(\Omega)$. For the 
medius analysis of Theorem~\ref{t:CRbestapprox} this 
assumption is dropped.

\begin{proof}[Proof of Theorem \ref{t:CRbestapprox}]
The non-conforming interpolation operator denoted by \linebreak
$\Inc:H^1_0(\Omega;\mathbb{R}^2)\to \Vcr$
is defined by 
\begin{align*}
 \Inc v (\operatorname{mid}(E)) := \fint_E v \ds 
 \qquad\text{for all }v\in H^1_0(\Omega;\mathbb{R}^2)
 \text{ and all }E\in\E(\Omega).
\end{align*}
The error of the velocity satisfies
\begin{align*}
 \|\gradnc(u-\ucr)\|^2
 &=\int_\Omega\gradnc(u-\Inc u):\gradnc(u-\ucr)\dx\\
  &\qquad +\int_\Omega\gradnc(\Inc u-\ucr):\gradnc(u-\ucr)\dx.
\end{align*}
In order to estimate the second term consider the function $J_3\wcr$
for $\wcr:=\Inc u-\ucr$ from
Lemma~\ref{l:lemJ3}.
Since $\divnc\wcr=0$,
the second term reads
\begin{align*}
 \int_\Omega&\gradnc(\Inc u-\ucr):\gradnc(u-\ucr)\dx\\
  &=\int_\Omega\nabla u:\gradnc(\wcr-J_3\wcr)\dx
   +\int_\Omega f\cdot (J_3\wcr-\wcr)\dx\\
  &\quad +\int_\Omega p\operatorname{div}J_3\wcr\dx.
\end{align*}
Since $\Pi_0 \nabla(J_3\wcr) = \gradnc\wcr$, this equals
\begin{align*}
\int_\Omega (\nabla u-\Pi_0 &\nabla u):\gradnc(\wcr-J_3\wcr)\dx
+\int_\Omega(f-\Pi_0 f)(J_3\wcr-\wcr)\dx\\
&\quad+\int_\Omega(p-\Pi_0 p)\operatorname{div} J_3\wcr \dx\\
&\leq\|\gradnc(u-\Inc u)\|\;\|\gradnc(\wcr-J_3\wcr)\|\\
 &\quad +\|\gradnc(J_3\wcr-\wcr)\|\oscf
  +\|p-\Pi_0 p\|\;\|\nabla J_3\wcr\|.
\end{align*}
The stability of $J_3$ leads to 
\begin{align*}
\int_\Omega&\gradnc(u-\ucr):\gradnc(\Inc u-\ucr)\dx\\
&\lesssim\big(\|\gradnc(u-\Inc u)\|+\oscf+\|p-\Pi_0 p\|\big)\;\|\gradnc\wcr\|.
\end{align*}
This implies
\begin{align*}
 \|\gradnc(u-\ucr)\|\lesssim\|\gradnc(u-\Inc u)\|+\|p-\Pi_0 p\|+\oscf.
\end{align*}
For the error of the pressure the discrete inf-sup condition implies that there
exists $\vcr\in\Vcr$ with $\|\gradnc\vcr\|=1$ such that
\begin{align*}
 \|\pcr-\Pi_0 p\|
&\lesssim\int_\Omega(\pcr-\Pi_0 p)\operatorname{div}\vcr\dx.
\end{align*}
The integral mean property $\Pi_0\nabla J_3\vcr = \gradnc\vcr$ implies
\begin{align*}
 \|\pcr-\Pi_0 p\|
&= \int_\Omega\gradnc\ucr:\gradnc\vcr\dx
 -\int_\Omega f\cdot\vcr\dx \\
&\quad-\int_\Omega\Pi_0 p \divnc J_3\vcr\dx\\
&=\int_\Omega\gradnc(\ucr- u):\gradnc\vcr\dx
+\int_\Omega f\cdot(J_3\vcr-\vcr)\dx\\
&\quad+\int_\Omega(p-\Pi_0 p)\operatorname{div}J_3\vcr\dx\\
&\quad+\int_\Omega(\nabla u-\Pi_0\nabla u):\gradnc(\vcr-J_3\vcr)\dx.
\end{align*}
The approximation and stability properties of $J_3$
and $\Pi_0\nabla J_3\vcr = \gradnc\vcr$ imply
\begin{align*}
 \|\pcr-\Pi_0 p\|
&\lesssim\|\gradnc(u-\ucr)\|+\oscf+\|p-\Pi_0 p\|.
\end{align*}
This concludes the proof.
\end{proof}

%

\section{Comparison Results}\label{s:comparisonresults}
This section establishes comparisons between the FEMs introduced in
Subsection~\ref{ss:defFEMs}.

\subsection{CR-NCFEM versus MINI-FEM}
\label{ss:P1results}
This section compares CR-NCFEM 
with MINI-FEM.
\begin{theorem}\label{t:CRLessMini}
The solution  
$(\ucr,\pcr)\in \Vcr\times \big(P_0(\tri)\cap L^2_0(\Omega)\big)$ of 
the CR-NCFEM and the
solution $(\um,\pmini)\in \Vm\times 
\big(P_1(\tri)\cap C_0(\Omega)\cap L^2_0(\Omega)\big)$ of the MINI-FEM satisfy
\begin{align*}
 \|\gradnc(u-\ucr)\|+\|p-\pcr\|
 \lesssim \|\nabla(u-\um)\|+\|p-\pmini\| + \|h_\tri f\|.
\end{align*}
\end{theorem}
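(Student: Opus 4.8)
The plan is to deduce the estimate from the best-approximation result of Theorem~\ref{t:CRbestapprox} by inserting quantities built from the MINI solution into its right-hand side. Since $\min_{\vcr\in\Vcr}\|\gradnc(u-\vcr)\|=\|\gradnc(u-\Inc u)\|=\|\nabla u-\Pi_0\nabla u\|$ (the commuting property $\gradnc\Inc u=\Pi_0\nabla u$) and $\min_{\qcr\in P_0(\tri)}\|p-\qcr\|=\|p-\Pi_0 p\|$, the task reduces to bounding the three terms $\|\nabla u-\Pi_0\nabla u\|$, $\|p-\Pi_0 p\|$ and $\oscf$ by $\|\nabla(u-\um)\|+\|p-\pmini\|+\|h_\tri f\|$. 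The oscillation is immediate: as $\Pi_0$ is an $L^2$-contraction, $\oscf=\|h_\tri(f-\Pi_0 f)\|\le\|h_\tri f\|$.

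For the velocity term I would exploit the internal structure of the MINI space via the splitting $\um=\ulin+\ub$ with $\ulin\in(P_1(\tri)\cap\Cnull)^2$ and $\ub\in\B^2$. Because the conforming piece satisfies $\ulin\in(P_1(\tri)\cap\Cnull)^2\subseteq\Vcr$ and $\nabla\ulin$ is $\tri$-piecewise constant, it is admissible and
\[\|\nabla u-\Pi_0\nabla u\|\le\|\nabla(u-\ulin)\|\le\|\nabla(u-\um)\|+\|\nabla\ub\|.\]
It then remains to control the bubble contribution $\|\nabla\ub\|$. Testing the discrete momentum equation~\eqref{e:mini} with $\ub\in\B^2$ and using that $\int_T\nabla\ub\dx=0$ annihilates the piecewise constant $\nabla\ulin$, an integration by parts of the pressure term gives $\|\nabla\ub\|^2=\int_\Omega(f-\nabla\pmini)\cdot\ub\dx$. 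The bubble Poincaré inequality $\|\ub\|_{L^2(T)}\lesssim h_T\|\nabla\ub\|_{L^2(T)}$ together with the equivalence $\|h_\tri\nabla\pmini\|\approx\|\pmini-\Pi_0\pmini\|$ for the piecewise affine $\pmini$ then yields $\|\nabla\ub\|\lesssim\|h_\tri f\|+\|\pmini-\Pi_0\pmini\|$, so the bubble part is governed by the data and the oscillation of the discrete pressure.

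For the pressure term I would use a local (Fortin-type) inf-sup argument: on every $T\in\tri$ there is $v_T\in H^1_0(T;\R^2)$ with $\ddiv v_T=(p-\Pi_0 p)|_T$ and $\|\nabla v_T\|_{L^2(T)}\lesssim\|p-\Pi_0 p\|_{L^2(T)}$. Extending $v_T$ by zero and testing the continuous momentum balance~\eqref{e:weakForm}, the constant part of $\nabla u$ drops out because $\int_T\nabla v_T\dx=0$, which leads to
\[\|p-\Pi_0 p\|_{L^2(T)}\lesssim\|\nabla u-\Pi_0\nabla u\|_{L^2(T)}+h_T\|f\|_{L^2(T)}.\]
Summation over $T\in\tri$ bounds $\|p-\Pi_0 p\|$ by $\|\nabla u-\Pi_0\nabla u\|+\|h_\tri f\|$, and $\|\pmini-\Pi_0\pmini\|\le\|p-\pmini\|+\|p-\Pi_0 p\|$ links the discrete pressure oscillation back to the velocity oscillation, the pressure error and the data.

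The hard part will be closing the resulting system of inequalities. The quantities $\|\nabla u-\Pi_0\nabla u\|$, $\|\nabla\ub\|$ and $\|\pmini-\Pi_0\pmini\|$ control one another only up to $\|\nabla(u-\um)\|$, $\|p-\pmini\|$ and $\|h_\tri f\|$, and naive chaining is circular, since the velocity-gradient oscillation $\|\nabla u-\Pi_0\nabla u\|$ reappears on the right with an $O(1)$ constant (indeed Theorem~\ref{t:CRbestapprox} forces this oscillation into the error from below). I expect the decisive device to be the \emph{global} discrete inf-sup stability of the MINI-FEM (and of the CR-NCFEM), which pins down $\pmini$, hence $\|\pmini-\Pi_0\pmini\|$, independently of the exact pressure oscillation, so that the mutually coupled oscillation contributions can be absorbed into the left-hand side rather than merely re-estimated. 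Once this absorption is carried out, the three reductions combine to $\|\nabla u-\Pi_0\nabla u\|+\|p-\Pi_0 p\|+\oscf\lesssim\|\nabla(u-\um)\|+\|p-\pmini\|+\|h_\tri f\|$, and Theorem~\ref{t:CRbestapprox} concludes the proof.
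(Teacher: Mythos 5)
Your reduction is sound up to one decisive point, and you have correctly located it yourself: the system of inequalities you derive is circular and cannot be closed. Concretely, you obtain $\|\nabla u-\Pi_0\nabla u\|\le\|\nabla(u-\um)\|+\|\nabla\ub\|$, then $\|\nabla\ub\|\lesssim\|h_\tri f\|+\|\pmini-\Pi_0\pmini\|$, then $\|\pmini-\Pi_0\pmini\|\lesssim\|p-\pmini\|+\|p-\Pi_0 p\|$, and finally $\|p-\Pi_0 p\|\lesssim\|\nabla u-\Pi_0\nabla u\|+\|h_\tri f\|$, so that $\|\nabla\ub\|$ reappears on its own right-hand side with a generic shape-regularity constant. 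Absorption requires that constant to be strictly less than $1$, and the device you invoke to rescue this --- the global discrete inf-sup stability of the MINI-FEM --- does not deliver that: inf-sup stability controls $\|\pmini\|$ (or $\|p-\pmini\|$) in terms of residuals, but it does not make the coefficient of $\|\nabla u-\Pi_0\nabla u\|$ (equivalently of $\|\nabla\ub\|$) small, nor does it decouple $\|\pmini-\Pi_0\pmini\|$ from the exact-pressure oscillation in your chain. As written, the proof does not close.

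The missing ingredient in the paper is Lemma~\ref{l:LinEquivAll}, which bounds $\|\nabla(u-\ulin)\|$ (hence $\|\nabla\ub\|$) by $\|\nabla(u-\um)\|+\|p-\pmini\|+\oscf$ \emph{without} passing through $\|\nabla u-\Pi_0\nabla u\|$ or $\|p-\Pi_0 p\|$. The mechanism is not your energy identity $\|\nabla\ub\|^2=\int_\Omega(f-\nabla\pmini)\cdot\ub\dx$ (which only reproduces the circular bound), but Russo's \emph{explicit local representation} of the bubble component,
\begin{align*}
\ub\vert_T = \int_T b_T\dx\, b_T\,(\Pi_0 f-\nabla\pmini)/\|\nabla b_T\|^2
 + \int_T(f-\Pi_0 f)b_T\dx\, b_T/\|\nabla b_T\|^2 ,
\end{align*}
which identifies $\|\nabla\ub\|$ with $\|h_\tri(\Pi_0 f-\nabla\pmini)\|$ up to oscillations, combined with the \emph{efficiency} of the volume residual (Verf\"urth's bubble technique), $\|h_\tri(\nabla\pmini-\Pi_0 f-\Delta\ub)\|\lesssim\|\nabla(u-\um)\|+\|p-\pmini\|+\oscf$; the right-hand side here involves the full solution $\um$, not $\ulin$, so no circularity arises. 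The paper then treats $\|\pmini-\Pi_0\pmini\|$ the same way, via $\|\pmini-\Pi_0\pmini\|\le\|h_\tri\nabla\pmini\|\le\|h_\tri(\nabla\pmini+f+\Delta_{\operatorname{NC}}\ulin)\|+\|h_\tri f\|$ and the efficiency of that residual, rather than via $\|p-\Pi_0 p\|$. Your local Bogovskii-type estimate $\|p-\Pi_0 p\|\lesssim\|\nabla u-\Pi_0\nabla u\|+\|h_\tri f\|$ is correct and elegant, but it is exactly the step that re-injects the velocity oscillation and creates the loop; replacing your treatment of $\|\nabla\ub\|$ and $\|\pmini-\Pi_0\pmini\|$ by the residual-efficiency argument above is what is needed to complete the proof.
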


\begin{remark}
Since CR-NCFEM has a piecewise constant and the MINI-FEM has
a globally continuous and piecewise affine pressure approximation, the converse 
estimate cannot be expected to hold in general, cf.\ Theorem~\ref{t:pP1NlessCR}. 
The question remains open whether the unnatural continuous 
or the natural discontinuous 
pressure approximation is better. 
\end{remark}

\noindent The following lemma is essential in the proof of 
Theorem~\ref{t:CRLessMini}.

\begin{lemma}\label{l:LinEquivAll}
Let $\um=\ulin+\ub\in\Vm$ denote the solution of \eqref{e:mini} which is
split into $\ulin\in (P_1(\tri)\cap \Cnull)^2$ and $\ub\in\B^2$.
Then it holds
\begin{align*}
 \|\nabla(u-\ulin)\|\lesssim\|\nabla(u-\um)\|+\|p-\pmini\|+\oscf.
\end{align*}
\end{lemma}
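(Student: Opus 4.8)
The plan is to reduce the claim to an estimate for the bubble part $\ub=\um-\ulin$ alone. By the triangle inequality,
\[
 \|\nabla(u-\ulin)\|\le\|\nabla(u-\um)\|+\|\nabla\ub\|,
\]
so it suffices to bound $\|\nabla\ub\|$ by the asserted right-hand side. Since $\B^2\subseteq\Vm$, the bubble field $\ub$ is itself an admissible test function in the momentum equation of \eqref{e:mini}. The decisive structural observation is that $\nabla\ulin$ is $\tri$-piecewise constant while $\int_T\nabla\ub\dx=0$ for every $T\in\tri$ (because $\ub$ is a sum of interior bubbles vanishing on $\partial T$); hence the cross term $\int_\Omega\nabla\ulin:\nabla\ub\dx$ vanishes elementwise. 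Testing \eqref{e:mini} with $\vm=\ub$ therefore yields the identity
\[
 \|\nabla\ub\|^2=\int_\Omega f\cdot\ub\dx+\int_\Omega\pmini\,\ddiv\ub\dx.
\]

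Next I would feed in the exact solution. As $\ub\in H^1_0(\Omega;\R^2)$, it is also a legal test function in the weak formulation \eqref{e:weakForm}; subtracting the two relations and exploiting once more the elementwise mean-zero properties $\int_T\nabla\ub\dx=0$ and $\int_T\ddiv\ub\dx=0$ (which permit inserting the projections $\Pi_0\nabla u$ and $\Pi_0 p$) leads to
\[
 \|\nabla\ub\|^2=\int_\Omega(\nabla u-\Pi_0\nabla u):\nabla\ub\dx+\int_\Omega(\pmini-p)\,\ddiv\ub\dx.
\]
With Cauchy--Schwarz and $\|\ddiv\ub\|\le\sqrt2\,\|\nabla\ub\|$ this already gives the compact bound $\|\nabla\ub\|\lesssim\|\nabla u-\Pi_0\nabla u\|+\|p-\pmini\|$. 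To make the data oscillation visible I would in parallel keep the identity in its discrete form: integrating $\int_T\pmini\,\ddiv\ub\dx=-\int_T\gradnc\pmini\cdot\ub\dx$ by parts, splitting off $\Pi_0 f$, and using the Poincaré--Friedrichs inequality on the bubble, $\|\ub\|_{L^2(T)}\lesssim h_\tri\,\|\nabla\ub\|_{L^2(T)}$, one obtains
\[
 \|\nabla\ub\|\lesssim\|h_\tri(\Pi_0 f-\gradnc\pmini)\|+\oscf .
\]

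The main obstacle is the final step: controlling the piecewise-constant element residual $\|h_\tri(\Pi_0 f-\gradnc\pmini)\|$, equivalently the gradient oscillation $\|\nabla u-\Pi_0\nabla u\|$ to which it is comparable, by $\|\nabla(u-\um)\|+\|p-\pmini\|+\oscf$. This is genuinely delicate, since every naive Hilbert-space manipulation is self-referential: $\|\nabla u-\Pi_0\nabla u\|\le\|\nabla(u-\ulin)\|\le\|\nabla(u-\um)\|+\|\nabla\ub\|$ returns the very quantity one wants to bound, with constant one, and a standard bubble-efficiency argument degenerates because the element bubbles already lie in $\Vm$, so testing the residual against them merely reproduces Galerkin orthogonality. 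The way through is to notice that only the bubble-directional part of the oscillation of $\nabla u$ enters the identity above, and that this part is, through the momentum balance $-\Delta u=f-\nabla p$, tied to the oscillation of $f$: a velocity contribution of bubble shape forces $\Delta u$, hence $f$, to carry a matching intra-element oscillation whose $h_\tri$-weighted norm is precisely $\oscf$. I would make this quantitative by a local efficiency estimate on each element (or small patch), testing the residual with an auxiliary function \emph{outside} $\Vm$ and invoking the elementwise equations for $u$ and for $(\um,\pmini)$, so that the residual is expressed through $\nabla(u-\um)$, $p-\pmini$ and $f-\Pi_0 f$; summation over $\tri$ then closes the estimate. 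I expect this local efficiency and closing step to absorb the bulk of the technical work.
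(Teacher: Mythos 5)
Your reduction to $\|\nabla\ub\|$, the orthogonality $\int_\Omega\nabla\ulin:\nabla\ub\dx=0$, and the intermediate bound $\|\nabla\ub\|\lesssim\|h_\tri(\Pi_0 f-\gradnc\pmini)\|+\oscf$ are all correct and reproduce the first half of the paper's argument (the paper reaches the same bound from an explicit representation of $\ub$ rather than by testing with $\ub$). The genuine gap is the step you yourself flag as delicate and then only sketch: bounding $\|h_\tri(\Pi_0 f-\gradnc\pmini)\|$ by $\|\nabla(u-\um)\|+\|p-\pmini\|+\oscf$. Every route you indicate is circular. The Verf\"urth efficiency estimate, tested against the \emph{exact} equation with $b_T$ times a constant (a function that need not avoid $\Vm$ for this purpose), controls the residual of the full discrete velocity,
\begin{align*}
\|h_\tri(\gradnc\pmini-\Pi_0 f-\Delta\ub)\|\lesssim\|\nabla(u-\um)\|+\|p-\pmini\|+\oscf,
\end{align*}
where $\Delta\ub$ is taken elementwise; but you need the residual \emph{without} $\Delta\ub$ (since $\Delta\ulin=0$), and removing it by a triangle inequality plus an inverse estimate costs a term $\lesssim\|\nabla\ub\|$, which your own bound converts back into $\|h_\tri(\Pi_0 f-\gradnc\pmini)\|$ with a constant not visibly below one. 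Your alternative, testing the reduced residual with an auxiliary function outside $\Vm$, hits the same wall: the term $\int_T\Delta\ub\cdot(\text{test function})\dx$ reappears and can only be absorbed through $\|\nabla\ub\|$. The closing heuristic that a bubble-shaped velocity contribution ``forces $f$ to carry a matching oscillation'' is not correct as stated and does not substitute for an estimate.

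The missing ingredient is the explicit (Russo-type) formula for the bubble part, which follows from testing the discrete momentum equation with $b_T e_i$ on each element separately rather than with $\ub$ globally: it shows $\ub\vert_T$ is an explicit multiple of $b_T$ determined by $(\Pi_0 f-\gradnc\pmini)\vert_T$ and $\int_T(f-\Pi_0 f)b_T\dx$. This pins down $\Delta\ub$ exactly and lets one verify that inserting it into the residual causes no cancellation,
\begin{align*}
\|h_\tri(\Pi_0 f-\gradnc\pmini)\|\lesssim\|h_\tri(\gradnc\pmini-\Pi_0 f-\Delta\ub)\|+\oscf,
\end{align*}
because the relevant affine factor $1+\Delta b_T\int_T b_T\dx/\|\nabla b_T\|^2$ is bounded away from zero under shape regularity. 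Combined with the Verf\"urth efficiency for the full residual and your triangle inequality, this closes the proof. Without this (or an equivalent device that breaks the circularity), your argument is incomplete.
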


\begin{proof}The arguments of \cite{Russo96} 
determine the bubble part $\ub$ with a general function $f\in L^2(\Omega)$. 
For $b_T=\varphi_a\varphi_b\varphi_c\in\B$ with
the piecewise affine nodal basis functions $\varphi_a,\varphi_b,\varphi_c$
of $a,b,c$
and $T=\operatorname{conv}\{a,b,c\}\in\tri$
this
yields
\begin{align*}
\ub\vert_T = \int_T b_T\dx\, b_T\, 
		  (\Pi_0 f - \nabla \pmini)/\|\nabla
b_T\|^2+&\int_T(f-\Pi_0f)b_T\dx\, b_T/\|\nabla b_T\|^2.
\end{align*}
This implies
\begin{align*}
 \|\nabla \ub\|\lesssim \|h_\tri 
               (\Pi_0 f - \nabla\pmini)\|+\oscf.
\end{align*}
It holds
\begin{align*}
 \Delta\ub\vert_T =& \int_T b_T\dx\, (\nabla \pmini-\Pi_0 f)/\|\nabla b_T\|^2
\;\Delta b_T\\
&+\int_T(f-\Pi_0 f)b_T\dx\Delta b_T/\|\nabla b_T\|^2. 
\end{align*}
Since $\nabla \pmini-\Pi_0 f$ is piecewise constant and $\int_T b_Tdx\Delta
b_T\approx 1$ the previous two displayed formulas result in 
\begin{align*}
 \|\nabla \ub\|
  \lesssim \|h_\tri (\nabla\pmini-\Pi_0 f-\Delta \ub)\|+\oscf.
\end{align*}
The bubble-technique of \cite{Ve1996} leads
to the efficiency
\begin{align*}
 \|h_\tri (\nabla\pmini-\Pi_0 f-\Delta \ub)\|
  \lesssim \|\nabla(u-\um)\|+\|p-\pmini\|+\oscf.
\end{align*}
This and a triangle inequality conclude the proof.
\end{proof}

\begin{proof}[Proof of Theorem \ref{t:CRLessMini}]
Theorem~\ref{t:CRbestapprox} implies 
for $\um=\ulin+\ub$ with $\ulin\in \big(P_1(\tri)\cap C_0(\Omega)\big)^2$
and $\ub\in\B^2$ that
\begin{align*}
  \|\gradnc(u&-\ucr)\|+\|p-\pcr\|\\
  &\lesssim \|\nabla (u-\ulin)\| + \|p-\Pi_0 \pmini\| + \oscf\\
  &\lesssim \|\nabla (u-\ulin)\| + \|p-\pmini\| + \|\pmini - \Pi_0 \pmini\|
+ \oscf.
\end{align*}
Since $\Delta_{\operatorname{NC}} \ulin=0$, a Poincar\'e inequality yields
\begin{align*}
 \|\pmini - \Pi_0 \pmini\|
 \leq \|h_\tri \nabla\pmini\|
 \leq \|h_\tri(\nabla\pmini+f+\Delta_{\operatorname{NC}}\ulin)\| + \|h_\tri f\|.
\end{align*}
The efficiency \cite{Verfuerth89} of
$\|h_\tri(\nabla\pmini+f+\Delta_{\operatorname{NC}}\ulin)\|$ reads
\begin{align*}
 \|h_\tri(\nabla\pmini + & f + \Delta_{\operatorname{NC}}\ulin)\|\\
 &\lesssim \|\nabla(u-\ulin)\|+\|p-\pmini\| + \oscf.
\end{align*}
This and Lemma~\ref{l:LinEquivAll} conclude the proof.
\end{proof}


\subsection{Comparison of $P_2 P_0$-FEM, BR-FEM and CR-NCFEM}\label{ss:P2results}

First, Theorem~\ref{t:P2lessBR} and Theorem~\ref{t:BRlessCR} 
of this section complete 
the comparisons \eqref{e:compCompleteIntro}.
Afterwards, Theorem~\ref{t:CRlessP2} discusses 
converse directions of those comparisons.

\begin{theorem}\label{t:P2lessBR}
The solution $(\ubr,\pbr)\in \Vbr\times \big(P_0(\tri)\cap L^2_0(\Omega)\big)$
of the BR-FEM and the solution $(\up,\pp)\in\Vp\times \big(P_0(\tri)\cap
L^2_0(\Omega)\big)$ of the $P_2P_0$-FEM
satisfy
\begin{align*}
 \|\nabla(u-\up)\| + \|p-\pp\|
 \lesssim
 \|\nabla( u-\ubr)\| + \|p-\pbr\|.
\end{align*}
\end{theorem}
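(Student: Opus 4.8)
The plan is to exploit that both the $P_2P_0$-FEM and the BR-FEM are conforming, share the same pressure space $Q_h:=P_0(\tri)\cap L^2_0(\Omega)$, and have nested velocity spaces, and then to invoke the quasi-optimality of the inf-sup stable $P_2P_0$-FEM. This is precisely the mechanism announced in the introduction.

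First I would record the space inclusion $\Vbr\subseteq\Vp$. Since $P_1(\tri)\subseteq P_2(\tri)$ and every edge bubble in $\B_\E$ lies in $(P_2(\tri)\cap\Cnull)^2$ by its very definition, the BR velocity space satisfies $\Vbr=(P_1(\tri)\cap C_0(\Omega))^2\oplus\B_\E\subseteq(P_2(\tri)\cap\Cnull)^2=\Vp$. Moreover both discrete pressures $\pp$ and $\pbr$ belong to $Q_h$, so the two methods differ only through the velocity ansatz space.

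Next I would invoke the abstract quasi-optimality for the $P_2P_0$-FEM. The bilinear form $(v,w)\mapsto\int_\Omega\nabla v:\nabla w\dx$ is coercive on $H^1_0(\Omega;\R^2)$ by the Poincar\'e--Friedrichs inequality, hence on $\Vp$, and the pair $(\Vp,Q_h)$ is inf-sup stable \cite{BBDDFF06}. The standard Brezzi theory for conforming mixed methods (e.g.\ \cite{BoffiBrezziFortin13}) therefore yields
\begin{align*}
 \|\nabla(u-\up)\|+\|p-\pp\|
 \lesssim \min_{\vp\in\Vp}\|\nabla(u-\vp)\|+\min_{\qp\in Q_h}\|p-\qp\|,
\end{align*}
with a constant depending only on the coercivity and inf-sup constants, hence only on the shape-regularity of $\tri$. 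I would then bound the two minima by the BR error: because $\ubr\in\Vbr\subseteq\Vp$, the velocity term obeys $\min_{\vp\in\Vp}\|\nabla(u-\vp)\|\le\|\nabla(u-\ubr)\|$, and because $\pbr\in Q_h$, the pressure term obeys $\min_{\qp\in Q_h}\|p-\qp\|\le\|p-\pbr\|$. Substituting both into the quasi-optimal bound gives the claimed estimate.

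The one point requiring care---rather than a genuine obstacle---is the justification of the quasi-optimality itself. One cannot argue by a naive C\'ea/best-approximation estimate in a divergence-free subspace, since the discretely divergence-free functions (those $\vp\in\Vp$ with $\int_T\ddiv\vp\dx=0$ for all $T\in\tri$) are \emph{not} pointwise divergence-free and therefore do not lie in the continuous constraint space. This is exactly why the discrete inf-sup condition, and the resulting pressure contribution $\min_{\qp\in Q_h}\|p-\qp\|$, must enter the argument; everything else is a direct consequence of the inclusion $\Vbr\subseteq\Vp$ together with the shared pressure space.
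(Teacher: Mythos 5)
Your proposal is correct and follows essentially the same route as the paper, whose entire proof reads ``This follows from the conformity and stability of the $P_2 P_0$-FEM and $\Vbr\subseteq \Vp$''; you have merely unpacked the Brezzi quasi-optimality and the choice $\vp=\ubr$, $\qp=\pbr$ that the paper leaves implicit. No further comment is needed.
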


\begin{proof}
This follows from the conformity and stability of the $P_2 P_0$-FEM 
and $\Vbr\subseteq \Vp$.
\end{proof}

\begin{remark}\label{r:BRlessP2P0}
The $P_2 P_0$-FEM and the BR-FEM approximate the velocity field
with different polynomial order. In the case of vanishing pressure 
$p=0$ and smooth regularity, the $P_2 P_0$-FEM converges like a second-order method, whereas
the BR-FEM remains a first-order method. 
Thus, the converse estimate cannot be expected to hold in general.
\end{remark}

\begin{theorem}\label{t:BRlessCR}
 It holds 
\begin{align*}
 \|\nabla(u-\ubr)\|+\|p-\pbr\|\lesssim \|\gradnc(u-\ucr)\|+\|p-\pcr\|.
\end{align*}
\end{theorem}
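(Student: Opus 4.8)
The plan is to compare $\ubr$ with a conforming companion of $\ucr$ that lies in the Bernardi--Raugel space $\Vbr$ and reproduces the elementwise divergence. Applying $J_1$ from Lemma~\ref{l:lemJ3} componentwise already yields a conforming $P_1$ approximation, but $J_1\ucr$ does not control the mean flux across edges; I therefore correct it by the normal edge bubbles that span $\B_\E$. Concretely I set
\[ J\ucr := J_1\ucr + \sum_{E\in\E(\Omega)}\Big(\fint_E(\ucr - J_1\ucr)\cdot\nu_E\ds\Big)\,b_E\,\nu_E, \]
with the edge bubble $b_E=6\varphi_a\varphi_b$ normalised so that $\fint_E b_E\ds=1$. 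By construction $J\ucr\in(P_1(\tri)\cap C_0(\Omega))^2\oplus\B_\E=\Vbr$.

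Two properties of $J\ucr$ drive the argument. First, since $b_E$ vanishes on every edge other than $E$ and $\fint_E b_E\ds=1$, the companion reproduces the mean normal flux edge by edge, $\fint_E J\ucr\cdot\nu_E\ds=\fint_E\ucr\cdot\nu_E\ds$; integrating $\ddiv$ over a triangle and using $\divnc\ucr=0$ (which follows from testing the mass equation in \eqref{e:CRproplem} with $\qcr=\divnc\ucr$) gives $\int_T\ddiv(J\ucr)\dx=\int_T\divnc\ucr\dx=0$ for every $T\in\tri$. Second, the edge correction is controlled by the $J_1$ error: a trace inequality together with the approximation estimate of Lemma~\ref{l:lemJ3} bounds the coefficients $\fint_E(\ucr-J_1\ucr)\cdot\nu_E\ds$, so that
\[ \|\gradnc(\ucr-J\ucr)\|\lesssim\|\gradnc(\ucr-J_1\ucr)\|\approx\min_{\varphi\in H^1_0(\Omega)}\|\gradnc(\ucr-\varphi)\|\le\|\gradnc(u-\ucr)\|. \]

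With the companion in hand the estimate splits into a velocity and a pressure part. For the velocity I write $\|\nabla(u-\ubr)\|\le\|\nabla(u-J\ucr)\|+\|\nabla(J\ucr-\ubr)\|$; the first term is $\lesssim\|\gradnc(u-\ucr)\|$ by the displayed bound. For the second, I note that $w_h:=J\ucr-\ubr$ lies in the discrete Bernardi--Raugel kernel (both $\int_T\ddiv(J\ucr)\dx=0$ and $\int_T\ddiv\ubr\dx=0$ hold, the latter from \eqref{e:BR}), test the difference of the continuous form \eqref{e:weakForm} and the discrete problem \eqref{e:BR} with $w_h$, and use that the piecewise constant pressures drop out on the kernel; this leaves $\int\nabla(u-\ubr):\nabla w_h\dx=\int(p-\Pi_0 p)\ddiv w_h\dx\lesssim\|p-\Pi_0 p\|\,\|\nabla w_h\|$, whence $\|\nabla(u-\ubr)\|\lesssim\|\gradnc(u-\ucr)\|+\|p-\Pi_0 p\|$. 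For the pressure I invoke the discrete inf-sup stability of the BR-FEM: choosing $\vbr$ with $\|\nabla\vbr\|=1$ that realises the inf-sup for $\pbr-\Pi_0 p$ and substituting the two momentum equations yields $\|\pbr-\Pi_0 p\|\lesssim\|\nabla(u-\ubr)\|+\|p-\Pi_0 p\|$. A triangle inequality together with $\|p-\Pi_0 p\|=\min_{q\in P_0(\tri)}\|p-q\|\le\|p-\pcr\|$ combines the two bounds into the assertion.

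The main obstacle is the design of the companion: it must simultaneously land in the restrictive space $\Vbr$ (where only normal edge bubbles are admissible), reproduce the elementwise divergence so that $J\ucr-\ubr$ is genuinely discretely divergence-free, and retain first-order approximation quality. Matching only the normal flux through the edge bubbles is exactly what reconciles these three requirements; verifying the approximation bound for the correction is then a routine trace/inverse-estimate computation, and the remaining velocity and pressure steps are the standard kernel-coercivity and discrete inf-sup arguments for a conforming, stable mixed method.
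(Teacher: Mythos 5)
Your proof is correct, and it reaches the same conclusion by the same basic mechanism as the paper -- a conforming companion of $\ucr$ inside $\Vbr$ combined with the stability of the BR-FEM -- but you take a noticeably longer and more constructive route. The paper's proof is two lines: since the BR-FEM is conforming and inf-sup stable, the abstract quasi-best-approximation estimate
\[
\|\nabla(u-\ubr)\|+\|p-\pbr\|\lesssim\min_{\vbr\in\Vbr}\|\nabla(u-\vbr)\|+\min_{\qbr\in P_0(\tri)\cap L^2_0(\Omega)}\|p-\qbr\|
\]
is invoked directly with the test pair $(J_1\ucr,\pcr)$, and Lemma~\ref{l:lemJ3} bounds $\|\nabla(u-J_1\ucr)\|$ by $\|\gradnc(u-\ucr)\|$. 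You instead re-derive this best-approximation bound from first principles (kernel-coercivity for the velocity, discrete inf-sup for the pressure), and to make the kernel argument work you upgrade $J_1\ucr$ to a divergence-preserving companion $J\ucr\in\Vbr$ by adding normal edge bubbles that match the mean flux $\fint_E\ucr\cdot\nu_E\ds$. All the individual steps check out: the normalisation $\fint_E b_E\ds=1$ does give $\int_T\ddiv(J\ucr)\dx=\int_T\divnc\ucr\dx=0$, the trace/inverse estimates do control the correction by $\|\gradnc(\ucr-J_1\ucr)\|$, and the final assembly via $\|p-\Pi_0 p\|\le\|p-\pcr\|$ is fine. The only remark is that the edge-bubble correction is not needed for this statement: the abstract best-approximation result for a stable conforming mixed method places no divergence constraint on the competitor, so the plain $J_1\ucr\in(P_1(\tri)\cap\Cnull)^2\subseteq\Vbr$ already suffices, which is exactly what the paper exploits. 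What your refinement buys is a self-contained argument that bypasses the Brezzi-type theorem and isolates the pressure contribution as $\|p-\Pi_0 p\|=\min_{q\in P_0(\tri)}\|p-q\|$ rather than the generic pressure best-approximation error -- a sharpening that is immaterial here (since $\pcr$ is itself piecewise constant) but would matter in divergence-preserving or pressure-robust variants of the estimate.
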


\begin{proof}
Consider the operator $J_1:\CR\rightarrow P_1(\tri)\cap C_0(\Omega)$ from 
Lemma~\ref{l:lemJ3}. Since the BR-FEM is a conforming FEM, it holds
\begin{align*}
 \|\nabla(u-\ubr)\|+\|p-\pbr\|\lesssim
\|\nabla(u-J_1\ucr)\|+\|p-\pcr\|. 
\end{align*}
(Here, the operator $J_1$ is applied componentwise.)
The operator $J_1$ satisfies 
\[\|\nabla(u-J_1\ucr)\|\leq
\|\gradnc(u-\ucr)\|+\|\gradnc(\ucr-J_1\ucr)\|\lesssim
\|\gradnc(u-\ucr)\|.\]
This concludes the proof.
\end{proof}


\begin{theorem}\label{t:CRlessP2}
It holds
\begin{align}\label{e:CRlessP2}
 \|\gradnc( u-\ucr)\| + \|p-\pcr\| \lesssim& \|\nabla(u-\up)\| +\|p-\pp\|\\
\notag &+\oscf+\|\nabla\up-\Pi_0\nabla \up\|
\end{align}
as well as
\begin{align}\label{e:CRlessPBR}
 \|\gradnc( u-\ucr)\| + \|p-\pcr\| \lesssim& \|\nabla(u-\ubr)\| +\|p-\pbr\|\\
\notag &+\oscf+\|\nabla\ubr-\Pi_0\nabla \ubr\|.
\end{align}
\end{theorem}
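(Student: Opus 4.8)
The plan is to derive both inequalities directly from the best-approximation result of Theorem~\ref{t:CRbestapprox} by inserting a suitable non-conforming comparison function built from $\up$ (resp.\ $\ubr$). For \eqref{e:CRlessP2}, I would apply Theorem~\ref{t:CRbestapprox} with the choices $\vcr:=\Inc\up\in\Vcr$, where $\Inc$ is the non-conforming interpolation operator introduced in the proof of Theorem~\ref{t:CRbestapprox} (applied componentwise), and $\qcr:=\pp\in P_0(\tri)\cap L^2_0(\Omega)$. These are admissible since $\up\in\Vp\subseteq H^1_0(\Omega;\R^2)$ and $\pp$ is already piecewise constant and mean-free, and they give
\begin{align*}
 \|\gradnc(u-\ucr)\|+\|p-\pcr\|
 \lesssim \|\gradnc(u-\Inc\up)\|+\|p-\pp\|+\oscf.
\end{align*}

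The key step is the commuting property $\gradnc\Inc v=\Pi_0\nabla v$ for all $v\in H^1_0(\Omega;\R^2)$. On each $T\in\tri$ the interpolant $\Inc v$ is affine, so integration by parts together with the midpoint exactness of the edge quadrature yield $\int_T\nabla\Inc v\dx=\int_{\partial T}\Inc v\,\nu\ds=\sum_{E\subseteq\partial T}(\fint_E v\ds)\,|E|\,\nu_E=\int_{\partial T}v\,\nu\ds=\int_T\nabla v\dx$, hence $\nabla\Inc v|_T=\Pi_0\nabla v|_T$. Applying this to $v=\up$ and using $\gradnc\up=\nabla\up$ (conformity of $\up$), a triangle inequality splits
\begin{align*}
 \|\gradnc(u-\Inc\up)\|\leq\|\nabla(u-\up)\|+\|\nabla\up-\Pi_0\nabla\up\|,
\end{align*}
since $\gradnc(\up-\Inc\up)=\nabla\up-\Pi_0\nabla\up$. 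Combining the two displays gives \eqref{e:CRlessP2}.

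For \eqref{e:CRlessPBR} I would run the identical argument with $\ubr\in\Vbr\subseteq H^1_0(\Omega;\R^2)$ and $\pbr\in P_0(\tri)\cap L^2_0(\Omega)$, choosing $\vcr:=\Inc\ubr$ and $\qcr:=\pbr$ and using $\gradnc\Inc\ubr=\Pi_0\nabla\ubr$. There is no genuine obstacle here; the only point requiring care is recognising that the additive term $\|\nabla\up-\Pi_0\nabla\up\|$ (resp.\ $\|\nabla\ubr-\Pi_0\nabla\ubr\|$) is exactly the non-conforming interpolation error $\|\gradnc(\up-\Inc\up)\|$ forced by the conforming-to-nonconforming gap. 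This term measures the failure of the $P_2$ (resp.\ enriched $P_1$) gradient to be piecewise constant and is unavoidable, consistently with the absence of a clean converse to the comparison chain \eqref{e:compCompleteIntro}.
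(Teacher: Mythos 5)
Your proposal is correct and follows essentially the same route as the paper: apply Theorem~\ref{t:CRbestapprox} with $\vcr=\Inc\up$ (resp.\ $\Inc\ubr$) and $\qcr=\pp$ (resp.\ $\pbr$), then use $\gradnc\Inc\up=\Pi_0\nabla\up$ and a triangle inequality. The only difference is that you spell out the proof of the commuting property $\gradnc\Inc v=\Pi_0\nabla v$, which the paper uses without comment.
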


\begin{proof}
Theorem \ref{t:CRbestapprox} immediately leads to
\begin{align*}
  \|\nabla(u-\ucr)\|+\|p-\pcr\|\lesssim
\|\gradnc(u-\Inc \up)\|+\|p-\pp\|+\oscf.
\end{align*}
A triangle inequality and $\gradnc\Inc \up=\Pi_0\nabla \up$ yield
\[\|\gradnc(u-\Inc \up)\|\leq
\|\nabla(u-\up)\|+\|\nabla \up-\Pi_0\nabla \up\|.\] 
This completes the proof of \eqref{e:CRlessP2}.\\
The same arguments prove the second statement.
\end{proof}

\subsection{Non-comparability of continuous and discontinuous pressure}
\label{ss:pressure}
This section compares FEMs with  pressure approximations
in $P_1(\tri)\cap\C\cap L^2_0(\Omega)$ with
FEMs with pressure approximations in $P_0(\tri)\cap L^2_0(\Omega)$.
The subsequent theorems state that 
FEMs with discontinuous pressure approximations are {\em not} 
comparable with FEMs with continuous pressure approximation.

\begin{theorem}\label{t:pP1NlessCR}
 Let $(u_h,p_h)$ denote the discrete solution of the Stokes equations for any
finite element method which approximates the pressure $p$ 
with continuous piecewise affine
functions $p_h\in P_1(\tri)\cap \C\cap L^2_0(\Omega)$. 
Let $(u_H,p_H)$ denote the solution of the CR-NCFEM, the $P_2 P_0$-FEM 
or the BR-FEM.
Then, in general,
\begin{align*}
 \|p-p_h\|\not\lesssim \|\gradnc(u-u_H)\|+\|p-p_H\|.
\end{align*}
\end{theorem}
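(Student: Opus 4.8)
The plan is to produce a single Stokes problem whose exact solution $(u,p)$ has a genuinely discontinuous, piecewise constant pressure, and then to exploit that all three discontinuous-pressure methods can reproduce such a pressure arbitrarily well while any continuous piecewise affine approximation cannot. I would fix a polygonal domain $\Omega$ split by a straight interface $\Gamma$ of positive length into two subdomains $\Omega_1,\Omega_2$, and prescribe $p\in L^2_0(\Omega)$ constant on each $\Omega_i$ with a nonzero jump $[p]_\Gamma\neq 0$. To realise this $p$ as a Stokes pressure with $f\in L^2(\Omega)$, I would prescribe the velocity through a stream function $u=\operatorname{curl}\phi$ (automatically divergence-free) that is piecewise smooth, vanishes on $\partial\Omega$, and whose normal derivative jumps across $\Gamma$ exactly by $[\partial_n u]_\Gamma=[p]_\Gamma\,\nu_\Gamma$. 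Setting $f:=-\Delta u+\nabla p$ elementwise then gives $f\in L^2(\Omega)$: the jump condition forces the singular interface contributions of the distributional $-\Delta u$ and $\nabla p$ to cancel, so that $(u,p)$ satisfies the weak form \eqref{e:weakForm} and is the exact Stokes solution.

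Next I would take a shape-regular family of triangulations $(\tri)$ resolving $\Gamma$, so that $p\in P_0(\tri)\cap L^2_0(\Omega)$ for every mesh and each element lies in a single smooth piece of $u$. For the CR-NCFEM, Theorem~\ref{t:CRbestapprox} yields
\[\|\gradnc(u-\ucr)\|+\|p-\pcr\|\lesssim\min_{\vcr\in\Vcr}\|\gradnc(u-\vcr)\|+\min_{q\in P_0(\tri)}\|p-q\|+\oscf.\]
Here the middle term vanishes because $p\in P_0(\tri)$, while the elementwise smoothness of $u$ and $f\in L^2(\Omega)$ make the remaining two terms of order $h$; the same $O(h)$ bound follows for the $P_2P_0$- and BR-FEM from their conforming quasi-optimality together with $\min_{q_h}\|p-q_h\|=0$. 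Hence in all three cases $\|\gradnc(u-u_H)\|+\|p-p_H\|\lesssim h$.

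On the other hand, for any method with $p_h\in P_1(\tri)\cap\C\cap L^2_0(\Omega)$ the approximant is continuous, whereas $p$ jumps across $\Gamma$. A standard lower bound for the $L^2$ approximation of a step discontinuity by continuous piecewise affine functions gives $\|p-p_h\|\geq\operatorname{dist}_{L^2}(p,P_1(\tri)\cap\C)\gtrsim h^{1/2}$, the error being of order one on a strip of width $O(h)$ along $\Gamma$. Combining the two estimates,
\[\frac{\|p-p_h\|}{\|\gradnc(u-u_H)\|+\|p-p_H\|}\gtrsim\frac{h^{1/2}}{h}=h^{-1/2}\longrightarrow\infty\qquad(h\to 0),\]
so no mesh-size independent constant can satisfy the asserted inequality.

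The hard part will be the first step: producing an explicit divergence-free $u\in H^1_0(\Omega)$, smooth on each $\Omega_i$, that both satisfies the homogeneous boundary conditions and has a normal-derivative jump that is \emph{constant} along $\Gamma$ and equal to $[p]_\Gamma\,\nu_\Gamma$; only then are $f\in L^2(\Omega)$ and $p$ exactly piecewise constant. A convenient device is to seek $u=\operatorname{curl}\phi$ with a piecewise polynomial stream function $\phi$ and to translate the constant-jump and boundary conditions into linear constraints on its coefficients. The secondary technical point is the rigorous $h^{1/2}$ lower bound, which I would obtain by restricting the $L^2$ norm to the band of elements meeting $\Gamma$ and using that a continuous function cannot bridge a jump of fixed size across a strip of width $O(h)$ there.
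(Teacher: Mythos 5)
There is a fatal gap at the very first step---the one you yourself flag as ``the hard part'': the exact solution you want to construct does not exist. Suppose $u\in H^1_0(\Omega;\mathbb{R}^2)$ is divergence-free and piecewise $C^1$ up to the straight interface $\Gamma$ with unit normal $\nu$ and unit tangent $\tau$. Since $u\in H^1(\Omega;\mathbb{R}^2)$, its traces on $\Gamma$ from the two sides coincide, so the tangential derivative $\partial_\tau u$ has no jump across $\Gamma$. Writing $\operatorname{div}u=\nu\cdot\partial_\nu u+\tau\cdot\partial_\tau u$ and using $\operatorname{div}u=0$ on both sides of $\Gamma$ yields
\begin{align*}
 \nu\cdot[\partial_\nu u]_\Gamma=-\tau\cdot[\partial_\tau u]_\Gamma=0 .
\end{align*}
The transmission condition you correctly identify as necessary for $f=-\Delta u+\nabla p\in L^2(\Omega;\mathbb{R}^2)$ is $[\partial_\nu u]_\Gamma=[p]_\Gamma\,\nu$, whose normal component is $[p]_\Gamma$. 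Hence $[p]_\Gamma=0$: incompressibility together with $H^1$-continuity of the velocity forbids exactly the pressure jump on which your whole construction rests (equivalently, continuity of the normal stress $(\nabla u-pI)\nu$ combined with $\nu\cdot[\partial_\nu u]_\Gamma=0$ forces $[p]_\Gamma=0$). Consequently there is no Stokes problem with $f\in L^2(\Omega;\mathbb{R}^2)$, piecewise smooth divergence-free $u\in H^1_0(\Omega;\mathbb{R}^2)$ and a genuinely discontinuous $p\in P_0(\tri)\cap L^2_0(\Omega)$, and the subsequent comparison of $h^{1/2}$ against $h$ has nothing to stand on. No choice of stream function can repair this.

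The paper circumvents the obstruction by not insisting on a discontinuous exact pressure. It fixes a two-element triangulation of a rhombus and considers a family of right-hand sides $f_\varepsilon$ (a plateau of height $\varepsilon^{-1}$ on the strip $\lvert x\rvert\le\varepsilon$), for which $u\equiv0$ and $p_\varepsilon$ is a continuous ramp from $-1$ to $+1$ across that strip; for each fixed $\varepsilon>0$ this is a legitimate solution with $L^2$ data. As $\varepsilon\to0$ the piecewise constant pressures of the CR-NCFEM (and, via quasi-optimality, of the $P_2P_0$- and BR-FEM) approximate $p_\varepsilon$ with error tending to $0$, while a symmetry argument pins any continuous $P_1$ pressure to the value $0$ on the interface $x=0$, so that $\|p_\varepsilon-p_h\|\gtrsim1$. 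The failure of the inequality is thus exhibited by letting the \emph{data} degenerate on a \emph{fixed} mesh, rather than by refining the mesh for fixed data; if you want to retain a mesh-refinement flavour you would have to couple the data to the mesh (e.g.\ $\varepsilon\sim h$), since by the argument above a fixed $L^2$ right-hand side can never produce an exactly piecewise constant, discontinuous pressure.
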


\begin{proof}
On the rhombus $\Omega=\mathrm{conv}\{(1,0),(0,1),(-1,0),(0,-1)\}$ 
define the right-hand side $f_\varepsilon\in 
L^2(\Omega;\mathbb{R}^2)$
by $f_\varepsilon(x,y)=\varepsilon^{-1} (1,0)$ 
for $-\varepsilon\leq x\leq\varepsilon$ and $f_\varepsilon(x,y)=0$ otherwise.
Then $(u,p_\varepsilon)\in H^1_0(\Omega;\mathbb{R}^2)\times L^2_0(\Omega)$
with
$u\equiv0$ and
\begin{align*}
 p_\varepsilon(x,y):=\begin{cases}
             -1 & \text{ for }-1\leq x\leq-\varepsilon,\\
             x/\varepsilon & \text{ for }-\varepsilon\leq x\leq \varepsilon,\\
             1 & \text{ for }\varepsilon\leq x\leq1\\       
             \end{cases}
\end{align*}
satisfies
\begin{align*}
 \int_\Omega\nabla u:\nabla v\dx
    -\int_\Omega p_\varepsilon\operatorname{div}v\dx
  &=\int_\Omega f_\varepsilon\cdot v\dx,\\
 \int_\Omega q\operatorname{div} u\dx&=0.
\end{align*}
Let $\tri:=\{T_1,T_2\}$ be the triangulation with 
$T_1:=\operatorname{conv}\{(0,1),(0,-1),(1,0)\}$ and 
$T_2:=\operatorname{conv}\{(0,-1),(0,1),(-1,0)\}$.
The  solutions of the CR-NCFEM for the right-hand side $f_\varepsilon\in
L^2(\Omega;\mathbb{R}^2)$ then
reads $(\ucr,\pcr)\in\Vcr\times P_0(\tri)$ with
$\ucr\equiv0$ and $\pcr(x,y)=-1+\varepsilon/2+2\varepsilon^2/3$ 
for $-1\leq x\leq 0$
and $\pcr(x,y)=1-\varepsilon/2-2\varepsilon^2/3$ for $0\leq x\leq 1$.
This shows $\|u-\ucr\|=0$ and 
$\|p_\varepsilon-\pcr\|\to0$ for $\varepsilon\to0$.
On the other hand, 
symmetry arguments imply $p_h\vert_{\{0\}\times (-1,1)}=0$
and, hence, $\|p_\varepsilon-p_h\|\gtrsim1$.
This proves the assertion in the case that $(u_H,p_H)$
is the solution of the CR-NCFEM.
Since the $P_2 P_0$-FEM and the  BR-FEM are conforming,
the best-approximation property implies
for the solution $(u_H,p_H)$ 
of the $P_2 P_0$-FEM or the BR-FEM that
\begin{align*}
 \|\nabla(u-u_H)\|+\|p-p_H\|
 \lesssim \|p-\pcr\|.
\end{align*}
This concludes the proof.
\end{proof}

\begin{theorem}\label{t:pCRNlesspP1}
 Let $(u_h,p_h)$ denote the discrete solution of the Stokes equations for any
(conforming) FEM which approximates the pressure $p$ with piecewise affine
functions
and let $(u_H,p_H)$ be the solution of the CR-NCFEM, $P_2 P_0$-FEM or BR-FEM.
Then it holds
\begin{align}\label{e:CRnotlessMINI}
 \|p-p_H\|\not\lesssim\|\nabla(u-u_h)\|+\|p-p_h\|+\oscf.
\end{align}
\end{theorem}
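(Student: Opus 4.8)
The plan is to mirror the construction used in Theorem~\ref{t:pP1NlessCR}, but now with the roles of the continuous and the discontinuous pressure approximations exchanged. I would again seek a parametrised family of exact solutions on a fixed, very coarse triangulation for which the \emph{continuous} piecewise affine method $p_h$ tracks the true pressure $p$ well, while one of the discontinuous-pressure methods (CR-NCFEM, $P_2P_0$-FEM, or BR-FEM) has a pressure error $\|p-p_H\|$ that stays bounded away from zero. Since the inequality~\eqref{e:CRnotlessMINI} also carries the term $\|\nabla(u-u_h)\|+\oscf$ on the right, the family must be chosen so that \emph{all three} quantities $\|\nabla(u-u_h)\|$, $\|p-p_h\|$, and $\oscf$ tend to $0$ (or at least stay small) as the parameter degenerates, whereas $\|p-p_H\|\gtrsim 1$.

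First I would fix a geometry and a right-hand side family $f_\varepsilon$, presumably of the same rhombus-plus-two-triangles type as in the previous proof, but designed so that the exact pressure $p_\varepsilon$ is now \emph{piecewise affine and continuous} (rather than being well approximated by a piecewise constant). The key is to exploit that $P_0(\tri)\cap L^2_0(\Omega)$ cannot reproduce a genuinely linear-in-$x$ pressure: on each triangle the best $L^2$ approximation is the integral mean, so if $p_\varepsilon$ varies linearly across a triangle, the piecewise-constant projection error $\|p_\varepsilon-\Pi_0 p_\varepsilon\|$ is bounded below by a mesh-dependent but $\varepsilon$-independent constant on this fixed mesh. Then I would compute (or argue by symmetry, as before) that the CR-NCFEM pressure $\pcr$, being piecewise constant, satisfies $\|p_\varepsilon-\pcr\|\gtrsim 1$, while a continuous piecewise affine method reproduces $p_\varepsilon$ exactly or nearly so, giving $\|p-p_h\|\to 0$.

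The main obstacle is simultaneously controlling the velocity error $\|\nabla(u-u_h)\|$ and the oscillation $\oscf$ on the right-hand side. I would try to arrange $u\equiv 0$ (a pure pressure-driven equilibrium $\nabla p_\varepsilon=f_\varepsilon$ with $\operatorname{div}u=0$ trivially satisfied), so that every velocity error $\|\nabla(u-u_h)\|$ vanishes or is controlled by the inf-sup stability of the continuous method against the small data. The delicate point is $\oscf$: because $f_\varepsilon=\nabla p_\varepsilon$ must be reproduced, I would need $f_\varepsilon$ to be (nearly) piecewise constant on $\tri$ so that $\oscf=\|h_\tri(f_\varepsilon-\Pi_0 f_\varepsilon)\|$ is small, which is compatible with $p_\varepsilon$ being piecewise affine. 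Finally, as in Theorem~\ref{t:pP1NlessCR}, the $P_2P_0$-FEM and BR-FEM cases would follow from the CR-NCFEM case by invoking their conforming best-approximation property to bound $\|p-p_H\|$ below by a comparable quantity, thereby transferring the lower bound $\gtrsim 1$ to those methods as well.
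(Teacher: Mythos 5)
Your plan is essentially the paper's argument, and its core is sound: the paper takes the single example $f\equiv(1,0)$ on the two-triangle rhombus, for which $u\equiv0$ and $p(x,y)=x$, so that $f$ is constant (hence $\oscf=0$), the exact solution is reproduced exactly by any conforming FEM with continuous $P_1$ pressure (hence the whole right-hand side of \eqref{e:CRnotlessMINI} vanishes), while $p\notin P_0(\tri)$ forces $\|p-p_H\|>0$. You arrive at exactly this configuration, but wrap it in an unnecessary $\varepsilon$-family; no degeneration is needed, since the simplest instance already makes the right-hand side identically zero rather than merely small. The one step that does not work as stated is your treatment of the $P_2P_0$- and BR-FEM cases: the conforming best-approximation (quasi-optimality) property yields \emph{upper} bounds on $\|p-p_H\|$ and therefore cannot ``transfer'' a lower bound from the CR-NCFEM case. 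The correct and much simpler observation is that all three methods approximate the pressure in $P_0(\tri)\cap L^2_0(\Omega)$, so $\|p-p_H\|\geq\|p-\Pi_0 p\|>0$ holds uniformly for all of them because the exact pressure is not piecewise constant; with that repair your argument coincides with the paper's proof.
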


\begin{proof}
On the rhombus $\Omega=\mathrm{conv}\{(1,0),(0,1),(-1,0),(0,-1)\}$ with
the triangulation $\tri:=\{T_1,T_2\}$ with 
$T_1:=\operatorname{conv}\{(0,1),(0,-1),(1,0)\}$ and 
$T_2:=\operatorname{conv}\{(0,-1),(0,1),(-1,0)\}$
and
the right-hand side $f\equiv(1,0)$, the exact solution
equals $u\equiv0$ and $p(x,y):=x$. This is approximated exactly by any
(conforming) FEM
with pressure approximation in $P_1(\tri)\cap\C\cap L^2_0(\Omega)$.
Hence, the right-hand
side
in \eqref{e:CRnotlessMINI} vanishes.
The fact that the exact pressure 
is not piecewise constant $p\not\in P_0(\tri)$, implies
for the left-hand side $\|p-p_H\|\neq0$.
\end{proof}
\begin{remark}\label{r:2/3convergence}
 Theorem 3.9 of \cite{ETX} states that if $(u,p)\in H^3(\Omega)^2\times 
H^2(\Omega)$ the pressure of the MINI-FEM even converges with the rate of 
$-3/4$. This can be seen in Subsection \ref{ss:colliding} and underlines the 
above result.  
\end{remark}

\subsection{Further finite elements}\label{ss:furtherMethods}

This section discusses how the Taylor-Hood-FEM,
stabilised $P_1 P_0$-FEM,
discontinuous Galerkin FEM and a pseudostress FEM 
can be included in the comparions \eqref{e:compCompleteIntro}.

\paragraph{Taylor-Hood}
The most common second-order FEM is the Taylor-Hood FEM 
\cite{BoffiBrezziFortin13} with $P_2$ velocity approximation and continuous
$P_1$ pressure approximation. 
The conformity of this method and Lemma~\ref{l:LinEquivAll} immediately shows
\begin{align*}
  \|\nabla(u-u_{\operatorname{TH}})\|
    + \|p-p_{\operatorname{TH}}\|
  \lesssim
    \|\nabla(u-u_{\operatorname{MINI}})\|
    + \|p-p_{\operatorname{MINI}}\|
\end{align*}
for the solution $(u_{\operatorname{TH}}, p_{\operatorname{TH}})$
from the Taylor-Hood FEM.
The comparison to the $P_2 P_0$-FEM, BR-FEM, and 
CR-NCFEM is not clear 
because of the different ansatz spaces for the pressure.

\paragraph{Stabilised $P_1 P_0$-FEM}
A direct consequence of Theorem~\ref{t:CRbestapprox}
is a comparison result for CR-NCFEM with the
stabilised $P_1 P_0$-FEM \cite{DW89}. 
Let $(u_h,p_h)\in P_1(\tri;\mathbb{R}^2)\times \big(P_0(\tri)\cap
L^2_0(\Omega)\big)$
denote the discrete solution 
of the stabilised $P_1 P_0$-FEM,
then the
CR-NCFEM is
superior in the sense that
\begin{align*}
 \|\gradnc(u-\ucr)\|+\|p-\pcr\|
 \lesssim \|\nabla(u-u_h)\| + \|p-p_h\| + \oscf.
\end{align*}

\paragraph{Discontinuous Galerkin FEM}
 For the weakly over penalised discontinuous Galerkin FEM (WOPSIP) 
 in \cite{BB13,BCGG}, a similar best-approximation result to 
 Theorem~\ref{t:CRbestapprox} is proven in \cite{BCGG}. 
 Since the norm $\|\bullet\|_h$ defined therein equals the 
 norm $\|\gradnc\bullet\|$ for the CR-NCFEM, the two 
 best-approximation results immediately yield equivalence of 
 CR-NCFEM and  WOPSIP discontinuous Galerkin FEM. 

\paragraph{Pseudostress FEM}
The pseudostress-velocity approximation of the stationary Stokes equations
\cite{CKP11}
seeks $\sigma_\PS\in\PS(\tri)
:= \{(\tau_1,\tau_2)\in(\operatorname{RT}_0(\tri)\times\operatorname{RT}_0(\tri))\;\vert\;
\tau_j\in H(\operatorname{div},\Omega)\text{ for}\linebreak j=1,2\text{ and }
\int_\Omega\operatorname{tr}(\tau_1,\tau_2)\dx=0\}$
%
and $u_\PS\in P_0(\tri;\R^2)$ with
\begin{align*}
 \int_\Omega \tau_\PS:\dev\sigma_\PS\dx 
   + \int_\Omega u_\PS\operatorname{div}\tau_\PS \dx
   &= 0 
   &&\text{for all }\tau_\PS\in\PS(\tri),\\
 \int_\Omega v_\PS\operatorname{div}\sigma_\PS\dx
   &= \int_\Omega f\cdot v_\PS\dx
   &&\text{for all }v_\PS\in P_0(\tri;\R^2).
\end{align*}

\begin{theorem}\label{t:compPSCR}
The pseudostress approximation satisfies
\begin{equation}\label{e:compPSCR}
\begin{aligned}
 \|\nabla u-\dev\sigma_\PS\| &+ \|p+\operatorname{tr}\sigma_\PS/2\|\\
 &\lesssim \|\gradnc(u-\ucr)\| + \|p-\pcr\| + \oscf\\
 &\lesssim \|\nabla u-\dev\sigma_\PS\| 
     + \|p+\operatorname{tr}\sigma_\PS/2\| + \|h_\tri f\|.
\end{aligned}
\end{equation}
\end{theorem}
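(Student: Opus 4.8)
The plan is to sandwich the middle quantity $\|\gradnc(u-\ucr)\|+\|p-\pcr\|+\oscf$ between the two pseudostress quantities by using a best-approximation result on each side and only elementary competitors. The point of departure is the identification of the exact pseudostress $\sigma=\nabla u-pI$: since $\tr\nabla u=\ddiv u=0$, the field $\nabla u$ is itself trace-free, and $\tr\sigma=-2p$, $\dev\sigma=\sigma-\tfrac12(\tr\sigma)I=\nabla u$. Hence $\|\nabla u-\dev\sigma_\PS\|$ and $\|p+\tr\sigma_\PS/2\|$ measure exactly the errors in $\nabla u$ and $p$, and \eqref{e:compPSCR} compares them with the CR-NCFEM errors.

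For the first inequality I would invoke a medius-type best-approximation result for the pseudostress pair, parallel to Theorem~\ref{t:CRbestapprox} and in the spirit of \cite{CKP11}, namely $\|\nabla u-\dev\sigma_\PS\|+\|p+\tr\sigma_\PS/2\|\lesssim\min_{\tau_\PS\in\PS(\tri)}\big(\|\nabla u-\dev\tau_\PS\|+\|p+\tr\tau_\PS/2\|\big)+\oscf$. As competitor I take the \emph{constant} (zero linear part) field $\tau_\PS\in\PS(\tri)$ whose deviatoric part equals $\Pi_0\nabla u$ and whose trace equals $-2\Pi_0 p$; this is admissible because $\int_\Omega\tr\tau_\PS\dx=-2\int_\Omega\Pi_0 p\dx=-2\int_\Omega p\dx=0$. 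The $L^2$-optimality of $\Pi_0$ then gives $\|\nabla u-\Pi_0\nabla u\|\le\|\nabla u-\gradnc\ucr\|=\|\gradnc(u-\ucr)\|$ and $\|p-\Pi_0 p\|\le\|p-\pcr\|$, which yields the upper estimate.

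For the second inequality I would apply Theorem~\ref{t:CRbestapprox} with the canonical competitors $\vcr=\Inc u$ and $\qcr=\Pi_0 p$. Since $\gradnc\Inc u=\Pi_0\nabla u$, this bounds $\|\gradnc(u-\ucr)\|+\|p-\pcr\|$ by $\|\nabla u-\Pi_0\nabla u\|+\|p-\Pi_0 p\|+\oscf$. The $L^2$-orthogonality of $\Pi_0$ gives $\|\nabla u-\Pi_0\nabla u\|\le\|\nabla u-\dev\sigma_\PS\|+\|\dev\sigma_\PS-\Pi_0\dev\sigma_\PS\|$ and, analogously, $\|p-\Pi_0 p\|\le\|p+\tr\sigma_\PS/2\|+\tfrac12\|\tr\sigma_\PS-\Pi_0\tr\sigma_\PS\|$. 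The two remaining oscillation terms of $\sigma_\PS$ are controlled by the divergence constraint $\ddiv\sigma_\PS=\Pi_0 f$, which slaves the linear $\operatorname{RT}_0$ part of each row of $\sigma_\PS$ to $\Pi_0 f$; a scaling estimate then gives $\|\dev\sigma_\PS-\Pi_0\dev\sigma_\PS\|+\|\tr\sigma_\PS-\Pi_0\tr\sigma_\PS\|\lesssim\|h_\tri\Pi_0 f\|\le\|h_\tri f\|$. Together with $\oscf\le\|h_\tri f\|$ this establishes the lower estimate.

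The main obstacle is the best-approximation result used in the first inequality: establishing it at the low regularity $f\in L^2(\Omega)$ requires the discrete inf-sup stability of the pseudostress pair, the Raviart-Thomas commuting diagram $\ddiv\,I_{\operatorname{RT}}\tau=\Pi_0\ddiv\tau$, and a Helmholtz-type decomposition to convert the a~posteriori residual into the oscillation $\oscf$, mirroring the proof of Theorem~\ref{t:CRbestapprox}. The only other point needing care is the scaling estimate for the second inequality, where one uses that the linear part of $\sigma_\PS$ is fixed by $\ddiv\sigma_\PS=\Pi_0 f$; this asymmetry, the linear $\operatorname{RT}_0$ part improving the pseudostress approximation of $\nabla u$ but being discarded by the piecewise constant Crouzeix-Raviart gradient, is exactly what forces $\oscf$ into the upper and the full $\|h_\tri f\|$ into the lower estimate.
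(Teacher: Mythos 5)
Your argument for the second inequality is essentially sound: Theorem~\ref{t:CRbestapprox} with the competitors $\vcr=\Inc u$ and $\qcr=\Pi_0 p$, the $L^2$-optimality of $\Pi_0$, and the observation that $\ddiv\sigma_\PS=\Pi_0 f$ slaves the non-constant $\operatorname{RT}_0$ part of $\sigma_\PS$ elementwise, so that $\|\sigma_\PS-\Pi_0\sigma_\PS\|\lesssim\|h_\tri\Pi_0 f\|\leq\|h_\tri f\|$, together give the lower bound. The first inequality, however, contains a genuine gap --- in fact two. First, the medius best-approximation result for the pseudostress pair that you invoke is nowhere established; it is itself a theorem of comparable difficulty to Theorem~\ref{t:CRbestapprox} (discrete inf-sup stability of the dev--div form, a Fortin/commuting-diagram argument at minimal regularity, a Helmholtz decomposition), and you correctly flag it as ``the main obstacle'' without closing it. Second, even granting that result, your competitor is inadmissible: the piecewise constant matrix field with deviatoric part $\Pi_0\nabla u$ and trace $-2\Pi_0 p$ has, in general, discontinuous normal components across interior edges, so its rows do not belong to $H(\operatorname{div},\Omega)$ and the field is not in $\PS(\tri)$. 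Repairing this would require a normal-continuous $\operatorname{RT}_0$ competitor, e.g.\ a Fortin interpolant of $\nabla u-pI$, whose very definition already demands more than $L^2$ regularity of the exact pseudostress.

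The paper sidesteps both issues by a different mechanism: the explicit representation formula \eqref{e:PSrepresentation} from \cite{CGS13}, which identifies $\sigma_\PS$ exactly as $\gradnc\widetilde{u}_{\operatorname{CR}}-\tfrac12\,\Pi_0 f\otimes(\bullet-\operatorname{mid}(\tri))-\widetilde{p}_{\operatorname{CR}}I_{2\times 2}$, where $(\widetilde{u}_{\operatorname{CR}},\widetilde{p}_{\operatorname{CR}})$ is the CR-NCFEM solution for the right-hand side $\Pi_0 f$. With this identity both inequalities reduce to triangle inequalities plus (a) the bound $\|\Pi_0 f\otimes(\bullet-\operatorname{mid}(\tri))\|\lesssim\|h_\tri f\|$ together with the efficiency $\|h_\tri f\|\lesssim\|\gradnc(u-\ucr)\|+\|p-\pcr\|+\oscf$, and (b) the perturbation estimate $\|\gradnc(\ucr-\widetilde{u}_{\operatorname{CR}})\|+\|\pcr-\widetilde{p}_{\operatorname{CR}}\|\lesssim\oscf$ obtained from the discrete inf-sup condition. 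To complete your outline you must either prove the pseudostress best-approximation lemma and supply an admissible ($H(\operatorname{div})$-conforming) competitor, or switch to the representation formula.
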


\begin{proof}

Let $(\widetilde{u}_{\operatorname{CR}},\widetilde{p}_{\operatorname{CR}})
\in \Vcr\times P_0(\tri)\cap L^2_0(\Omega)$ denote the solution to the CR-NCFEM 
for the right-hand side $\Pi_0 f$.
Let $(\bullet -\operatorname{mid}(\tri))$ abbreviate
the function $(x-\operatorname{mid}(T))$ for 
$x\in T\in\tri$ with midpoint $\operatorname{mid}(T)$.
The solution $\sigma_\PS$ of the pseudostress approximation
of the Stokes equations \cite{CGS13} reads 
\begin{align}\label{e:PSrepresentation}
 \sigma_\PS = \gradnc \widetilde{u}_{\operatorname{CR}} 
    - \frac{\Pi_0 f}{2}\otimes (\bullet-\operatorname{mid}(\tri)) 
    - \widetilde{p}_{\operatorname{CR}} I_{2\times 2} . 
\end{align}
The deviatoric part of a matrix $A\in \R^{2\times2}$ reads 
$\dev(A):=A-\tr(A)/2I_{2\times 2}$. Since $\divnc 
\widetilde{u}_{\operatorname{CR}}\equiv 0$, it holds
\[\dev\sigma_\PS = \gradnc
\widetilde{u}_{\operatorname{CR}} 
- \dev\bigg(\frac{\Pi_0 f}{2}\otimes (\bullet-\operatorname{mid}(\tri))\bigg).\]
This implies
\begin{align*}
 \|\nabla u - \dev\sigma_\PS\|
 \leq \|\gradnc(u-\widetilde{u}_{\operatorname{CR}})\|
    + \|\dev(\Pi_0 f\otimes (\bullet-\operatorname{mid}(\tri)))\|/2.
\end{align*}
For the pressure approximation, the representation formula
\eqref{e:PSrepresentation}
leads to
\begin{align*}
 \|p+\operatorname{tr}\sigma_\PS/2\|
 \leq \|p-\widetilde{p}_{\operatorname{CR}}\|
     + \|\operatorname{tr}(\Pi_0 f\otimes
(\bullet-\operatorname{mid}(\tri)))\|/4.
\end{align*}
The orthogonal split in the trace and the deviatoric part
and the obvious estimate
$\lvert\bullet-\operatorname{mid}(\tri)\rvert\leq h_\tri$ in $\Omega$
lead to
\begin{align*}
 \|\dev(\Pi_0 f\otimes (\bullet-\operatorname{mid}(\tri)))\|
 + \|\operatorname{tr}(\Pi_0 f\otimes (\bullet-\operatorname{mid}(\tri)))\|
 \lesssim \|h_\tri f\|.
\end{align*}
The efficiency of $\|h_\tri f\|$ \cite{DDP95} leads to 
\begin{align*}
 \|h_\tri f\|
 \lesssim \|\gradnc(u-\ucr)\| + \| p-\pcr\| + \oscf.
\end{align*}
The discrete inf-sup condition for CR-NCFEM guarantees the existence of 
$\vcr\in\CR$ with $\|\gradnc\vcr\|=1$ and
\begin{align*}
 \|\pcr-\widetilde{p}_{\operatorname{CR}}\|
 &\lesssim \int_\Omega (\pcr-\widetilde{p}_{\operatorname{CR}})\divnc\vcr\dx\\
 &= \int_\Omega \gradnc(\ucr-\widetilde{u}_{\operatorname{CR}}):\gradnc\vcr\dx
      + \int_\Omega (f-\Pi_0f)\vcr\dx.
\end{align*}
This yields
\begin{align*}
 \|\pcr-\widetilde{p}_{\operatorname{CR}}\|
 \lesssim \|\gradnc(\ucr-\widetilde{u}_{\operatorname{CR}})\|
     + \oscf.
\end{align*}
Since
$\divnc\ucr=\divnc\widetilde{u}_{\operatorname{CR}}=0$,
the problem \eqref{e:CRproplem} implies
$\|\gradnc(\ucr-\widetilde{u}_{\operatorname{CR}})\| \lesssim \oscf$.
The combination of the previous inequalities gives the 
first inequality in \eqref{e:compPSCR}.
The same arguments yield the second inequality in \eqref{e:compPSCR}.\end{proof}

\section{Numerical illustration}\label{s:numerics}

This section illustrates the behaviour 
of the CR-NCFEM, the  MINI-FEM, the $P_2 P_0$-FEM and the BR-FEM
in two examples (Subsections \ref{ss:colliding}--\ref{ss:Lshape}).
Subsection~\ref{ss:conclusions} draws some conclusions 
from the numerical experiments.

\subsection{Colliding flow}\label{ss:colliding}
On the square domain $\Omega = (-1,1)\times(-1,1)$ 
with right-hand side $f\equiv 0$, the 
exact velocity to the corresponding boundary conditions is given by
$u(x,y)=(20xy^4-4x^5,20x^4y-4y^5)$ with pressure
$p(x,y)=120x^2y^2-20x^4-20y^4-32/6$.
The convergence history plot of Figure~\ref{f:colliding} shows
the errors
\begin{align*}
  \sqrt{\|\gradnc(u-u_h)\|^2 + \|p-p_h\|^2}
\end{align*}
for the discrete solutions $(u_h,p_h)$ of
the CR-NCFEM, the MINI-FEM, the  $P_2 P_0$-FEM and the BR-FEM
plotted against the degrees of freedom.
The four FEMs 
yield the same rate of convergence of $-1/2$ with respect
to the number of degrees of freedom and the errors 
are all of the same size.
\begin{figure}
 \centering
 \includegraphics[width=0.75\textwidth]{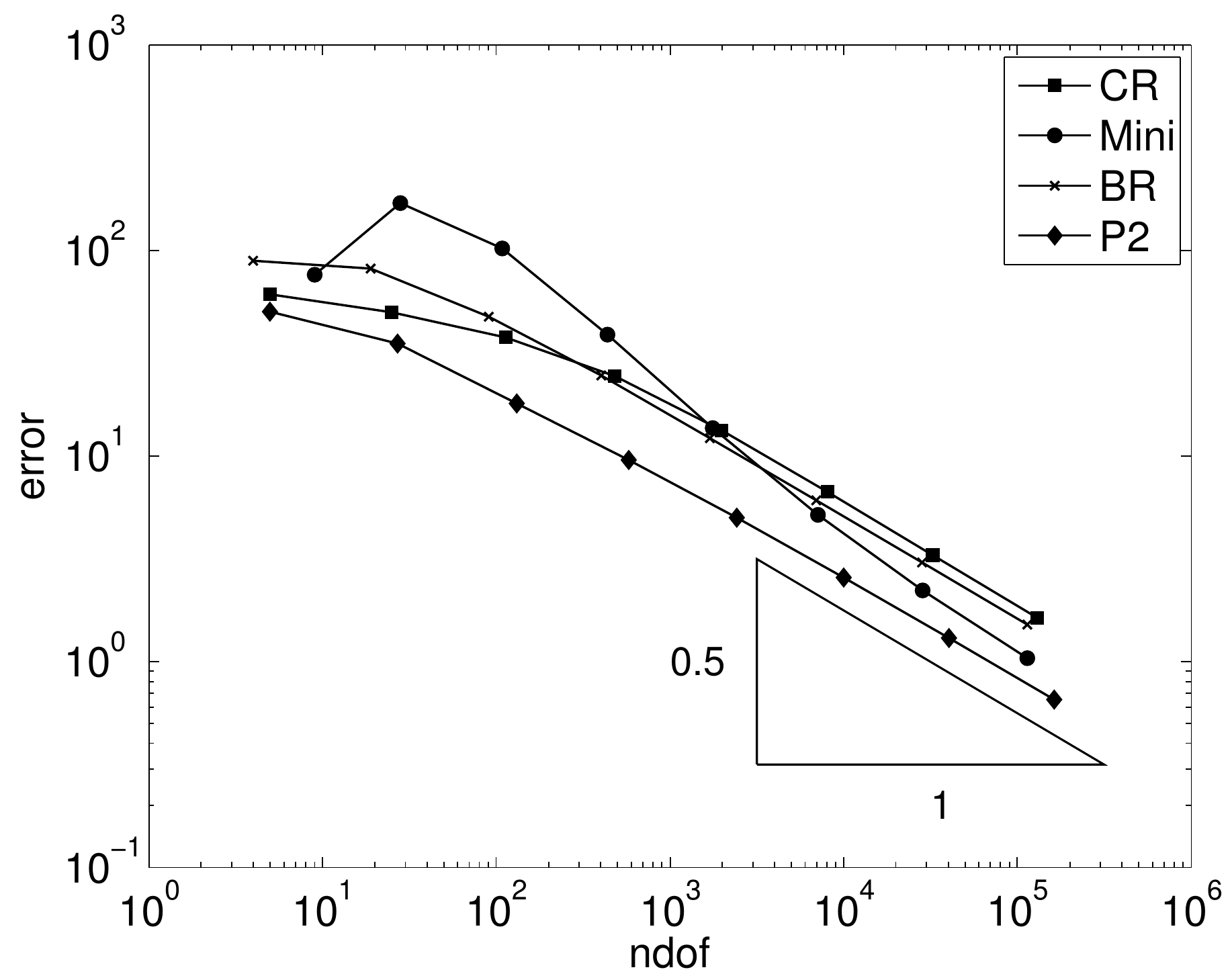}
 \caption{Convergence history for the colliding flow of 
    Subsection~\ref{ss:colliding}.}
 \label{f:colliding}
\end{figure}
\begin{figure}
 \centering
 \includegraphics[width=0.75\textwidth]{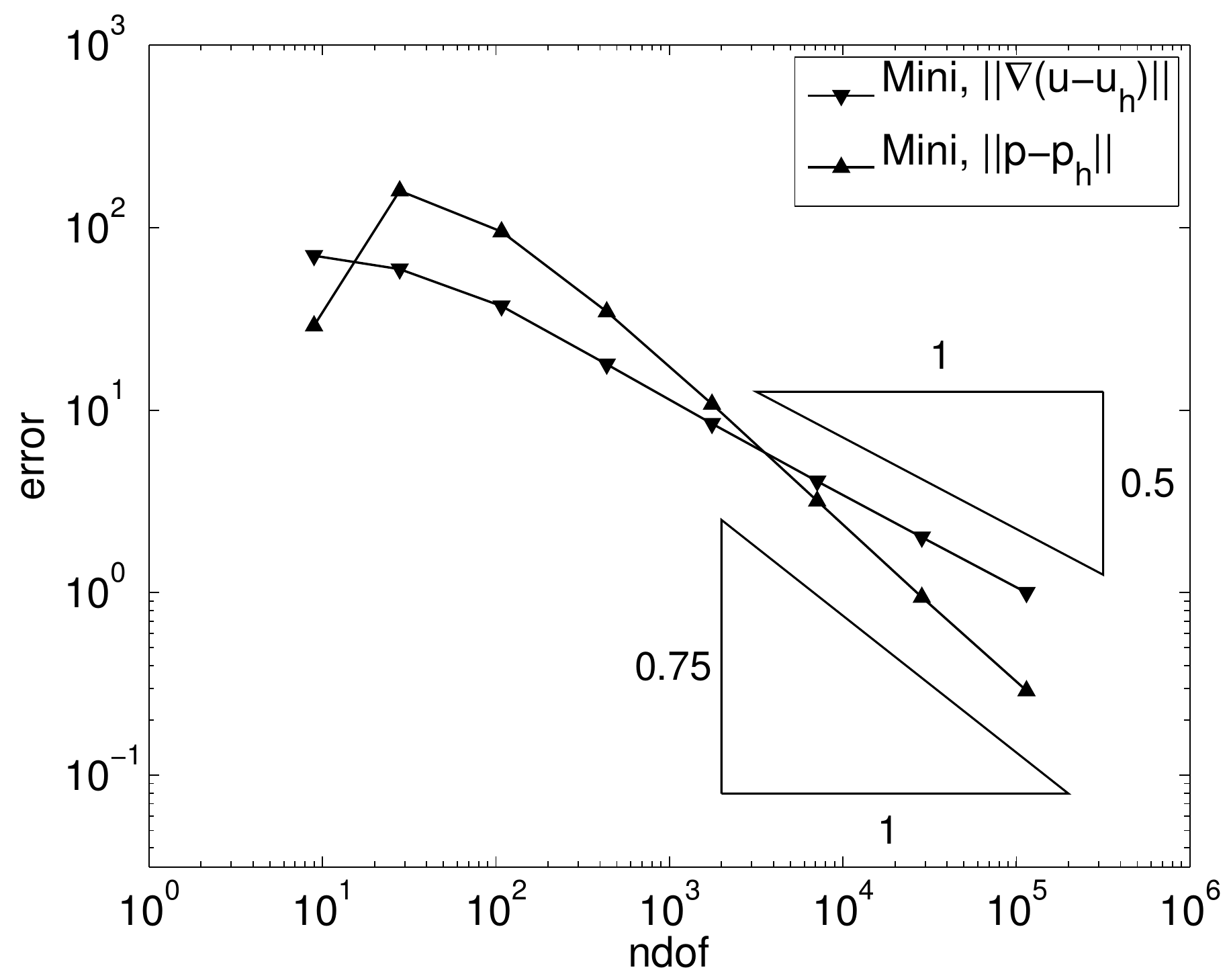}
 \caption{Convergence history for the pressure and velocity approximation of
    Mini-FEM for the colliding flow of
    Subsection~\ref{ss:colliding}.}
 \label{f:collidingPressure}
\end{figure}

Figure~\ref{f:colliding} shows that the MINI-FEM converges with an improved 
convergence rate in a pre-asymptotic range. This is due to the pressure 
approximation which converges with 
a rate of $-3/4$. This numerically highlights the result of Theorem 
3.9 
from \cite{ETX} which was stated in Remark \ref{r:2/3convergence}. Figure 
\ref{f:collidingPressure} clearly shows the difference of convergence rates for 
the pressure and velocity approximations. The pressure approximation converges 
with a rate of $-3/4$ whereas the velocity converges with a rate of $-1/2$ 
which also explains the overall convergence rate of $-1/2$ in the asymptotic 
regime.

\subsection{L-shaped domain}\label{ss:Lshape}
On the L-shaped domain 
$\Omega=(-1,1)\times(-1,1) \setminus \left( [0,1]\times[-1,0]\right)$
with right-hand side $f\equiv 0$,
the exact solution, with the characteristic singularity at the origin, for the 
corresponding boundary conditions, reads
\begin{align*}
 u(r,\vartheta) &= \begin{pmatrix}
                  r^\alpha ((1+\alpha)\sin(\vartheta)w(\vartheta)+\cos(\vartheta)w_\vartheta(\vartheta))\\
		  r^\alpha (-(1+\alpha)\cos(\vartheta)w(\vartheta)+\sin(\vartheta)w_\vartheta(\vartheta))
                 \end{pmatrix},\\
  p(r,\vartheta) &= -r^{\alpha-1} ((1+\alpha)^2 w_\vartheta (\vartheta) 
		  w_{\vartheta\vartheta\vartheta}(\vartheta)) /(1-\alpha)
\end{align*}
in polar coordinates with $\alpha = 0.54448373$ and
\begin{align*}
 w(\vartheta) = &(\sin((1+\alpha)\vartheta)\cos(\alpha\omega))/(1+\alpha)-\cos((1+\alpha)\vartheta)\\
    &-(\sin((1-\alpha)\vartheta)\cos(\alpha\omega))/(1-\alpha) + \cos((1-\alpha)\vartheta).
\end{align*} 
Figure~\ref{f:Lshape} shows the equivalence of all four FEMs
also for this example with a reduced convergence rate of $-1/4$ 
with respect to the number of degrees of freedom.
\begin{figure}
 \centering
 \includegraphics[width=0.75\textwidth]{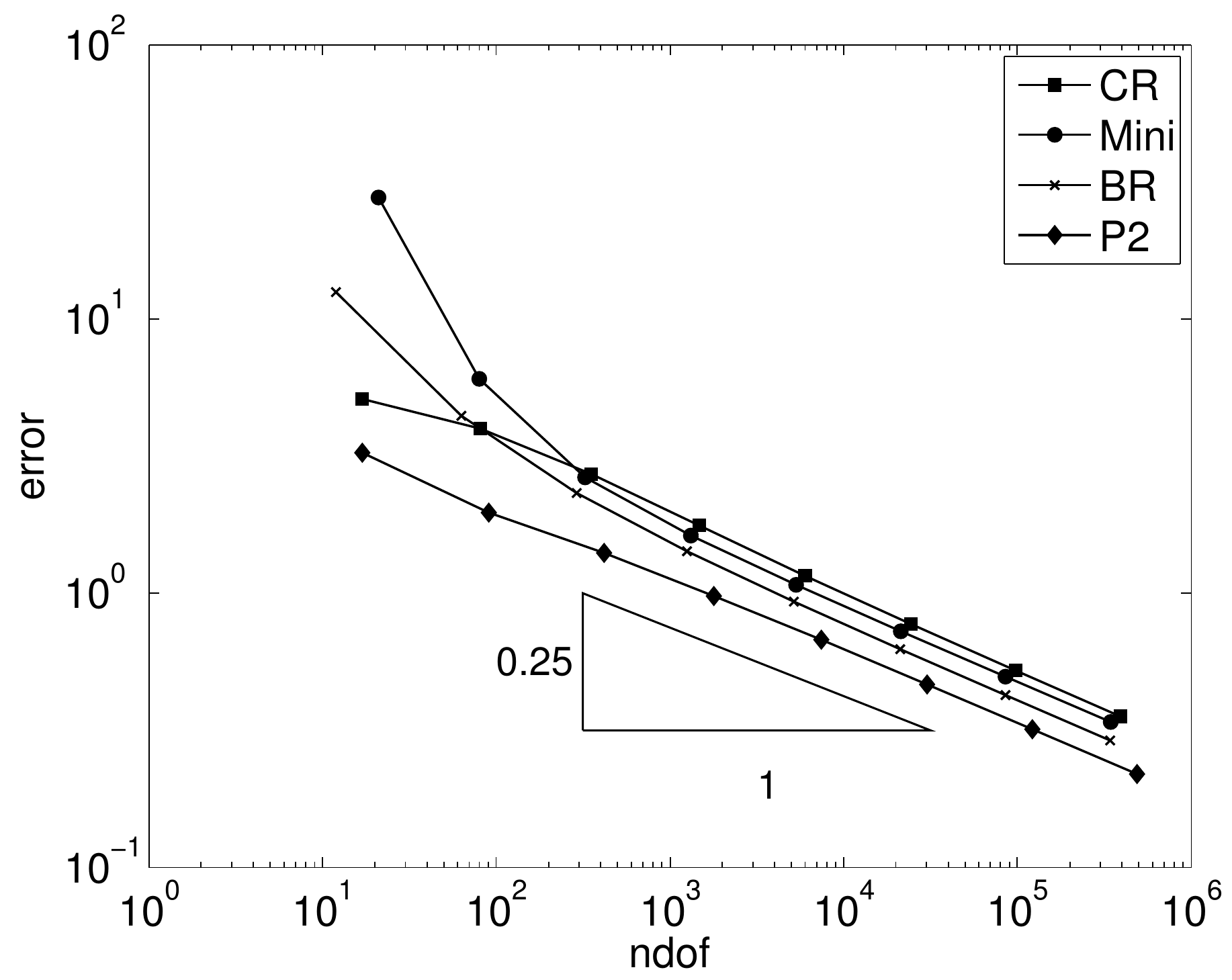}
 \caption{Convergence history for the L-shaped domain of 
    Subsection~\ref{ss:Lshape}.}
 \label{f:Lshape}
\end{figure}

\subsection{Conclusions}\label{ss:conclusions}

The considered methods allow for comparison in one direction
\begin{align*}
 \|\nabla(u-\up)\|+\|p-\pp\|
 &\lesssim \|\nabla(u-\ubr)\|+\|p-\pbr\|\\
 &\lesssim \|\gradnc(u-\ucr)\|+\|p-\pcr\|\\
 &\lesssim \|\nabla(u-\um)\|+\|p-\pmini\|+\|h_\tri f\|.
\end{align*}
In typical situations, for example, if $p \not\equiv 0$, numerical experiments 
show, that 
all the
methods (and also stabilised $P_1P_0$ , discontinuous Galerkin, and pseudostress
FEMs) exhibit equivalent accuracy on a per-degree-of-freedom basis.

It is clear that this observation disregards other measures for the quality and 
performance of FEMs such as application-driven error functionals or even the 
effort of implementation. Other performance measures may lead to different 
assessments of the methods in practice.

\section{Acknowledgements}
This work is supported
by the DFG Research Center 
Matheon Berlin and by the WCU program 
through KOSEF (R31-2008-000-10049-0).
M.~Schedensack is supported by the Berlin Mathematical School.

\bibliographystyle{plain}
\bibliography{comparisonStokes}

\begin{thebibliography}{10}

\bibitem{ABF}
D.~N. Arnold, F.~Brezzi, and M.~Fortin.
\newblock A stable finite element for the {S}tokes equations.
\newblock {\em Calcolo}, 21(4):337--344 (1985), 1984.

\bibitem{BCGG}
S.~Badia, R.~Codina, R.~Gudi, and J.~Guzm\'an.
\newblock Error analysis of discontinuous {G}alerkin methods for {S}tokes
  problem under minimal regularity.
\newblock {\em IMA J. Numer. Anal.}, 2013.
\newblock in press.

\bibitem{BB13}
A.~T. Barker and S.~C. Brenner.
\newblock A mixed finite element method for the {S}tokes equations based on a
  weakly over-penalized symmetric interior penalty approach.
\newblock {\em J. Sci. Comput.}, 2013.

\bibitem{BR}
C.~Bernardi and G.~Raugel.
\newblock Analysis of some finite elements for the {S}tokes problem.
\newblock {\em Mathematics of Computation}, 44(169):71--79, 1985.

\bibitem{BBDDFF06}
Daniele Boffi, Franco Brezzi, Leszek~F. Demkowicz, Ricardo~G. Dur{\'a}n,
  Richard~S. Falk, and Michel Fortin.
\newblock {\em Mixed finite elements, compatibility conditions, and
  applications}, volume 1939 of {\em Lecture Notes in Mathematics}.
\newblock Springer-Verlag, Berlin, 2008.
\newblock Lectures given at the C.I.M.E. Summer School held in Cetraro, June
  26--July 1, 2006, Edited by Boffi and Lucia Gastaldi.

\bibitem{BoffiBrezziFortin13}
Daniele Boffi, Franco Brezzi, and Michel Fortin.
\newblock {\em Mixed finite element methods and applications}, volume~44 of
  {\em Springer Series in Computational Mathematics}.
\newblock Springer, Heidelberg, 2013.

\bibitem{Braess09}
Dietrich Braess.
\newblock An a posteriori error estimate and a comparison theorem for the
  nonconforming {$P_1$} element.
\newblock {\em Calcolo}, 46(2):149--155, 2009.

\bibitem{BrennerScott08}
Susanne~C. Brenner and L.~Ridgway Scott.
\newblock {\em The mathematical theory of finite element methods}, volume~15 of
  {\em Texts in Applied Mathematics}.
\newblock Springer, New York, third edition, 2008.

\bibitem{CGS13EV}
C.~Carstensen, D.~Gallistl, and M.~Schedensack.
\newblock Adaptive nonconforming {C}rouzeix-{R}aviart {FEM} for eigenvalue
  problems.
\newblock {\em Math. Comp.}, 2013.
\newblock (accepted for publication).

\bibitem{CPS12}
C.~Carstensen, D.~Peterseim, and M.~Schedensack.
\newblock Comparison results of finite element methods for the {P}oisson model
  problem.
\newblock {\em SIAM J. Numer. Anal.}, 50(6):2803--2823, 2012.

\bibitem{CGS13}
Carsten Carstensen, Dietmar Gallistl, and Mira Schedensack.
\newblock Quasi-optimal adaptive pseudostress approximation of the {S}tokes
  equations.
\newblock {\em SIAM J. Numer. Anal.}, 51(3):1715--1734, 2013.

\bibitem{CKP11}
Carsten Carstensen, Dongho Kim, and Eun-Jae Park.
\newblock A priori and a posteriori pseudostress-velocity mixed finite element
  error analysis for the {S}tokes problem.
\newblock {\em SIAM J. Numer. Anal.}, 49(6):2501--2523, 2011.

\bibitem{CR}
M.~Crouzeix and P.-A. Raviart.
\newblock Conforming and nonconforming finite element methods for solving the
  stationary {S}tokes equations. {I}.
\newblock {\em Rev. Fran\c caise Automat. Informat. Recherche Op\'erationnelle
  S\'er. Rouge}, 7(R-3):33--75, 1973.

\bibitem{DDP95}
Enzo Dari, Ricardo Dur{\'a}n, and Claudio Padra.
\newblock Error estimators for nonconforming finite element approximations of
  the {S}tokes problem.
\newblock {\em Math. Comp.}, 64(211):1017--1033, 1995.

\bibitem{DW89}
Jim Douglas, Jr. and Jun~Ping Wang.
\newblock An absolutely stabilized finite element method for the {S}tokes
  problem.
\newblock {\em Math. Comp.}, 52(186):495--508, 1989.

\bibitem{ETX}
Hagen Eichel, Lutz Tobiska, and Hehu Xie.
\newblock Supercloseness and superconvergence of stabilized low-order finite
  element discretizations of the {S}tokes problem.
\newblock {\em Math. Comp.}, 80(274):697--722, 2011.

\bibitem{Gudi10}
Thirupathi Gudi.
\newblock A new error analysis for discontinuous finite element methods for
  linear elliptic problems.
\newblock {\em Math. Comp.}, 79(272):2169--2189, 2010.

\bibitem{Russo96}
Alessandro Russo.
\newblock Bubble stabilization of finite element methods for the linearized
  incompressible {N}avier-{S}tokes equations.
\newblock {\em Comput. Methods Appl. Mech. Engrg.}, 132(3-4):335--343, 1996.

\bibitem{Verfuerth89}
R.~Verf{\"u}rth.
\newblock A posteriori error estimators for the {S}tokes equations.
\newblock {\em Numer. Math.}, 55(3):309--325, 1989.

\bibitem{Ve1996}
R.~Verf{\"u}rth.
\newblock {\em A review of a posteriori estimation and adaptive mesh-refinement
  techniques}.
\newblock Advances in Numerical Mathematics. Wiley-Teubner, 1996.

\end{thebibliography}

\end{document}